\documentclass{amsart}

\usepackage[utf8]{inputenc}
\usepackage[T1]{fontenc}

\usepackage{lmodern}

\usepackage[]{amsmath}
\usepackage{amsthm}
\usepackage{latexsym}
\usepackage{amssymb}
\usepackage{mathrsfs}
\usepackage{exscale}
\usepackage{textcomp}
\usepackage[all,ps,tips,tpic]{xy}
\usepackage{upgreek}
\usepackage{url}
\usepackage{booktabs}
\usepackage[final,pdftex,colorlinks=false,pdfborder={0 0 0}]{hyperref}
\usepackage{aliascnt}

\usepackage{tikz}
\usetikzlibrary{arrows,calc,matrix,patterns}

\newcommand{\Rat}{\mathbb{Q}}
\newcommand{\Nat}{\mathbb{N}}

\newcommand{\Z}{\mathbb{Z}}

\newcommand{\defeq}{\mathrel{\mathop:}=}

\numberwithin{thmcounter}{section}
\newaliascnt{thmauto}{thmcounter}
\newaliascnt{conjauto}{thmcounter}

\newaliascnt{defauto}{thmcounter}

\newaliascnt{exauto}{thmcounter}

\newaliascnt{lemauto}{thmcounter}

\newaliascnt{propauto}{thmcounter}

\newaliascnt{corauto}{thmcounter}

\newaliascnt{remauto}{thmcounter}

\theoremstyle{plain}
\newtheorem{thm}[thmauto]{Theorem}
\newtheorem{conj}[conjauto]{Conjecture}
\newtheorem{ex}[exauto]{Example}

\newtheorem{prop}[propauto]{Proposition}

\newtheorem{thmA}{Theorem}

\theoremstyle{definition}
\newtheorem{definition}[defauto]{Definition}

\theoremstyle{remark}
\newtheorem{rem}[remauto]{Remark}
\newtheorem*{note*}{Note}

\let\originalleft\left
\let\originalright\right
\renewcommand{\left}{\mathopen{}\mathclose\bgroup\originalleft}
\renewcommand{\right}{\aftergroup\egroup\originalright}

\def\polhk#1{\setbox0=\hbox{#1}{\ooalign{\hidewidth
    \lower1.5ex\hbox{`}\hidewidth\crcr\unhbox0}}}

\DeclareMathOperator{\Conf}{Conf}
\DeclareMathOperator{\UConf}{UConf}

\DeclareMathOperator{\id}{id}
\DeclareMathOperator{\Star}{Star}
\DeclareMathOperator{\HH}{H}
\DeclareMathOperator{\map}{map}

\DeclareMathOperator{\TC}{TC}

\DeclareMathOperator{\sh}{sh}

\usepackage[pdftex]{hyperref}
\usepackage{microtype}
\usepackage[draft]{fixme}
\usepackage{soul}

\usepackage{tikz}
\usepackage{pgfplots}
\usepgfplotslibrary{polar}

\newcommand{\particleNr}[2]{\begin{scope}[shift={#1}]\node[fill=white,draw,circle,inner sep=1pt] at
  (0,0) {\small #2};\end{scope}}

\begin{document}
\title[TC of configuration spaces of fully
  articulated and banana graphs]{Topological complexity of configuration spaces of fully
  articulated graphs and banana graphs}

\author{Daniel Lütgehetmann} \address{University of Aberdeen,
  Aberdeen, United Kingdom} \email{daniel.lutgehetmann@abdn.ac.uk}

\author{David Recio-Mitter} \address{University of Aberdeen, Aberdeen,
  United Kingdom} \email{david.reciomitter@abdn.ac.uk}

\keywords{configuration spaces; graphs; topological complexity}

\begin{abstract}
  In this paper we determine the topological
  complexity of configuration spaces of graphs which are not
  necessarily trees, which is a crucial assumption in previous
  results. We do this for two very different classes of graphs:
  fully articulated graphs and banana graphs. 

  We also complete the computation in the case of trees to include
  configuration spaces with any number of points, extending a proof of
  Farber.

  At the end we show that an unordered configuration space on a graph
  does not always have the same topological complexity as the
  corresponding ordered configuration space (not even when they are
  both connected). Surprisingly, in our counterexamples the topological
  complexity of the unordered configuration space is in fact \emph{smaller} than
  for the ordered one.
\end{abstract}

\maketitle

\section{Introduction}

It is a fundamental problem in industrial robotics to coordinate the
movements of automated guided vehicles (AGVs) along a system of roads
or rails, in such a way that no collisions occur. These situations can
be modeled \cite{Ghrist01} by the configuration space $\Conf_n(G)$ of
$n$ particles on a graph $G$, which is given by
\[ \Conf_n(G) \defeq \left\{ (x_1, \ldots, x_n)\,|\, \text{$x_i\neq
    x_j$ for $i\neq j$} \right\} \subset G^n. \]

Every collision-free movement between two configurations of $n$ points
on the graph $G$ corresponds to a path in the space $\Conf_n(G)$. The
\emph{motion planning problem} consists in finding a function which
assigns to any pair of points a path between them.

Given a topological space $X$ let $p_X:X^I\to X\times X$ denote the
free path fibration on $X$, with projection $p_X(\gamma) =
(\gamma(0),\gamma(1))$. A continuous motion planner on $X$ is
precisely a section of $p_X$. Such a continuous motion planner only
exists in very special cases (in fact, it exists if and only if $X$ is
contractible). Motivated by this, Farber introduced the topological
complexity of a space \cite{Far03}. It is a numerical homotopy
invariant which measures the minimal discontinuity of every motion
planner on this space.

\begin{definition}
  The \emph{topological complexity} of $X$, denoted $\TC(X)$, is
  defined to be the minimal $k$ such that $X\times X$ admits a cover
  by $k+1$ open sets $U_0,U_1,\ldots , U_k$, on each of which there
  exists a local section of $p_X$ (that is, a continuous map
  $s_i:U_i\to X^I$ such that $p_X\circ s_i =
  \mathrm{incl}_i:U_i\hookrightarrow X\times X$).
\end{definition}

Note that here we use the reduced version of $\TC(X)$, which is one
less than the original definition by Farber.


Let $T$ be a tree and let $|V_{\ge3}|$ denote the number of essential
vertices of $T$ (i.e. vertices with valence at least 3). Farber showed
that $\TC(\Conf_n(T))=2|V_{\ge3}|$ whenever $n\ge2|V_{\ge3}|$; see
\cite{Far05} and also Farber's survey article \cite{Far17}. In
particular, $\TC(\Conf_n(T))$ doesn't depend on $n$ within that range.
Later Scheirer computed the topological complexity for some $n$
outside the aforementioned range \cite{Sch}.

We complete this picture, extending Farber's argument to compute
$\TC(\Conf_n(T))$ for all $n$, see Theorem \ref{thm:tc-trees}. It should be mentioned that
Scheirer's methods also apply to unordered configuration spaces,
whereas ours do not.

Apart from a few isolated examples, the topological complexity
$\TC(\Conf_n(G))$ has only been computed in the case when $G$ is a
tree. Of course, the requirement that $G$ be a tree is too restrictive
from the point of view of robotics. Indeed, a road system with no
loops in it is bound to be highly inefficient.

A vertex is an \emph{articulation} if removing it makes the graph
disconnected and a connected graph is \emph{fully articulated} if
every essential vertex is an articulation. In
Theorem \ref{thm:tc-separable-graphs} we extend Farber's result to
$\TC(\Conf_n(G))=2|V_{\ge3}|$ for all fully articulated graphs $G$ (of
which trees are a special case) for $n\ge2|V_{\ge3}|$.

In the proof of the results just mentioned we use the
cohomology ring structure of $\Conf_n(G)$; more precisely we use the
\emph{zero-divisor cup-length} (see \autoref{sec:tc}). Our proof
essentially generalizes that of Farber in \cite{Far05}. The key
technical ingredient which enables this generalization is that of
\emph{configuration spaces with sinks}, which were introduced by
Chettih and the first author in \cite{CheLue16}.

The other main result in this paper features a class of graphs which
have no articulations at all, the \emph{banana graphs} $B_k$ (here
$B_k$ denotes the graph with two vertices and $k$ edges connecting
them). In this case we needed a completely different approach. The
starting point is the fact that $\Conf_3(B_4)$ has a particularly nice
homotopy type, namely that of an orientable surface of genus 13
\cite[Proposition 4.3, p. 19]{CheLue16}.  The topological complexity
of surfaces is known to be equal to the zero-divisor cup-length. Using
a Mayer-Vietoris spectral sequence argument involving sinks, similar
to arguments made by the first author in \cite{Luetgehetmann17}, we
are able to gain information about the cohomology ring of
$\Conf_3(B_k)$ for $k\ge4$. This allows us to compute
$\TC(\Conf_n(B_k))$ in all cases except for $k=3$ with $n\ge4$, see
Theorem \ref{thm:tc-banana-graphs}.

In the last section we discuss a conjecture of Farber regarding ordered
configuration spaces of graphs in relation to the results of this paper.
In particular, we show that the conjecture is not true if we replace
``ordered'' by ``unordered'' and give examples where the topological
complexity differs from the ordered to the unordered setting.

The authors would like to thank Mark Grant for very helpful discussions and
suggestions.

\section{Topological complexity}\label{sec:tc}


Recall the definition from the introduction.

\begin{definition}
  The \emph{topological complexity} of $X$, denoted $\TC(X)$, is
  defined to be the minimal $k$ such that $X\times X$ admits a cover
  by $k+1$ open sets $U_0,U_1,\ldots , U_k$, on each of which there
  exists a local section of $p_X$ (that is, a continuous map
  $s_i:U_i\to X^I$ such that $p_X\circ s_i =
  \mathrm{incl}_i:U_i\hookrightarrow X\times X$).
\end{definition}

In the remainder of this section we state several well-known results
about topological complexity which will be useful later on.

Firstly, the topological complexity $\TC(X)$ is a homotopy invariant
of $X$.

\begin{prop}[\cite{Far03}]\label{lem:retract}
  If $X$ is a homotopy retract of $Y$, then $\TC(X)\le\TC(Y)$.

  Furthermore, if $X$ is homotopy equivalent to $Y$, then
  $\TC(X)=\TC(Y)$.
\end{prop}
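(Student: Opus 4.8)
The plan is to show that any motion planner on $Y$ can be transported to one on $X$ using the same number of open sets, so that $\TC(X)\le\TC(Y)$, and then to deduce the equality statement by symmetry. First I would unpack the hypothesis: $X$ being a homotopy retract of $Y$ provides maps $i\colon X\to Y$ and $r\colon Y\to X$ together with a homotopy $H\colon X\times I\to X$ from $r\circ i$ to $\id_X$. For each $x\in X$, write $\gamma_x$ for the path $t\mapsto H(x,t)$, which runs from $r(i(x))$ to $x$ and depends continuously on $x$. These correction paths are the device that lets us repair endpoints after transporting a path through $r$.

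Next, suppose $\TC(Y)=k$ and fix an open cover $V_0,\ldots,V_k$ of $Y\times Y$ together with local sections $\sigma_j\colon V_j\to Y^I$ of $p_Y$. I would pull this cover back along $i\times i\colon X\times X\to Y\times Y$, setting $U_j\defeq(i\times i)^{-1}(V_j)$; since the $V_j$ cover $Y\times Y$ and $i\times i$ is continuous, the $U_j$ form an open cover of $X\times X$ by $k+1$ sets. The heart of the construction is a local section $s_j\colon U_j\to X^I$ of $p_X$ over each $U_j$. Given $(x_0,x_1)\in U_j$, the path $r\circ\sigma_j(i(x_0),i(x_1))$ lies in $X$ but runs from $r(i(x_0))$ to $r(i(x_1))$ rather than from $x_0$ to $x_1$. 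I would correct this by concatenating three paths: the reverse of $\gamma_{x_0}$ (from $x_0$ to $r(i(x_0))$), then $r\circ\sigma_j(i(x_0),i(x_1))$, then $\gamma_{x_1}$ (from $r(i(x_1))$ to $x_1$). The concatenation runs from $x_0$ to $x_1$, so $p_X\circ s_j$ is the inclusion $U_j\hookrightarrow X\times X$, as required.

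The only genuine obstacle is verifying that $s_j$ is continuous. This reduces to the joint continuity of $(x,t)\mapsto H(x,t)$, the continuity of $\sigma_j$, and the continuity of path reversal and concatenation on the relevant path spaces. All three path-families — the reversed $\gamma_{x_0}$, the transported section, and $\gamma_{x_1}$ — vary continuously in $(x_0,x_1)\in U_j$, and concatenation is continuous in the compactly generated setting, so $s_j$ is a well-defined local section. This produces an open cover of $X\times X$ by $k+1$ sets admitting local sections of $p_X$, giving $\TC(X)\le k=\TC(Y)$.

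Finally, for the second statement I would note that a homotopy equivalence $X\simeq Y$ exhibits each of $X$ and $Y$ as a homotopy retract of the other: the homotopy inverse pair supplies the required maps and homotopies in both directions. Applying the first part twice yields $\TC(X)\le\TC(Y)$ and $\TC(Y)\le\TC(X)$, hence $\TC(X)=\TC(Y)$.
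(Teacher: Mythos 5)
Your proof is correct, and it is essentially Farber's original argument (pull back the cover along $i\times i$, transport each local section by $r$, and repair the endpoints with the homotopy paths $\gamma_x$), which the paper does not reproduce but simply cites as \cite{Far03}. One minor remark: the appeal to a compactly generated setting is unnecessary, since concatenation and reversal of paths are continuous for the compact-open topology on $X^I$ whenever the exponential law for the locally compact interval $I$ is available, i.e.\ always.
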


The dimension of $X$ gives us a general upper bound for $\TC(X)$.

\begin{prop}[\cite{Far03}]\label{lem:upperbound}
  Let $X$ be a path-connected paracompact space. Then the topological
  complexity of $X$ is bounded above by the covering dimension of the
  product:
  \[
  \TC(X)\le\text{dim}(X\times X).
  \]
  In particular this upper bound holds for all connected CW-complexes
  and in that case $\text{dim}(X\times X)$ is the CW-dimension.
\end{prop}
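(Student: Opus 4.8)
The plan is to recognize $\TC(X)$ as the (reduced) sectional category, or Schwarz genus, of the free path fibration $p_X\colon X^I\to X\times X$, and then to exploit the concrete meaning of a local section. A continuous section $s\colon U\to X^I$ of $p_X$ over an open set $U\subseteq X\times X$ assigns to each pair $(x,y)\in U$ a path from $x$ to $y$; under the exponential correspondence this is exactly a homotopy $H\colon U\times I\to X$ between the two coordinate projections $\mathrm{pr}_1|_U$ and $\mathrm{pr}_2|_U$. So the whole problem reduces to covering $X\times X$ by $\dim(X\times X)+1$ open sets on each of which the two projections to $X$ are homotopic. First I would handle the CW case, which is the one highlighted in the ``in particular'' clause and the one actually used later in the paper.

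For a connected CW-complex $X$, equip $X\times X$ with the product cell structure; its top cells have dimension $N\defeq\dim(X\times X)$. For each $k=0,\dots,N$ I would build an open set $U_k$ as a disjoint union $\coprod_\alpha D_\alpha$ of pairwise disjoint open neighborhoods of the open $k$-cells, each $D_\alpha$ deformation retracting onto the corresponding cell and hence contractible; the $U_k$ together cover $X\times X$. On a contractible piece $D_\alpha$ each of the maps $\mathrm{pr}_1|_{D_\alpha}$ and $\mathrm{pr}_2|_{D_\alpha}$ is null-homotopic, and since $X$ is path-connected the two (possibly distinct) constant maps they are homotopic to can be joined by a homotopy; composing gives $\mathrm{pr}_1\simeq\mathrm{pr}_2$ on $D_\alpha$. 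Because the $D_\alpha$ are disjoint, these homotopies can be chosen independently and assembled into a single homotopy over all of $U_k$, yielding a local section there. This produces the required $N+1$ open sets and gives $\TC(X)\le N$.

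For the general path-connected paracompact $X$, the cell-neighborhood construction is unavailable, and here I would instead invoke the general comparison $\mathrm{secat}(p_X)\le\mathrm{cat}(X\times X)$ together with the classical dimension bound $\mathrm{cat}(Y)\le\dim Y$ for connected paracompact $Y$. The first inequality follows purely formally from $p_X$ being a fibration: if $U\subseteq X\times X$ has null-homotopic inclusion, a null-homotopy together with the homotopy lifting property lifts the inclusion $U\hookrightarrow X\times X$ through $p_X$, producing a section of $p_X$ over $U$ out of any categorical cover. Applying $\mathrm{cat}(X\times X)\le\dim(X\times X)$ then closes the argument; note that the covering dimension behaves well enough that no fiber-connectivity hypothesis (which would fail, since the fiber $\Omega X$ need not be connected) is needed.

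The hard part will be the last, non-CW step: establishing $\mathrm{cat}(X\times X)\le\dim(X\times X)$ for a merely paracompact space cannot use a cell count and instead rests on the covering-dimension refinement property (every open cover admits a refinement of order at most $\dim(X\times X)+1$) combined with a partition-of-unity grouping argument in the style of Schwarz's genus theory. Everything else — the identification of sections with homotopies of projections, the use of path-connectedness to merge constant maps, and the homotopy-lifting comparison with $\mathrm{cat}$ — is routine once this dimension-theoretic input is in place.
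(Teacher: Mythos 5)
The paper offers no proof of this proposition at all --- it is quoted from \cite{Far03} --- so your proposal can only be measured against Farber's own argument, which is exactly the chain $\TC(X)=\mathrm{secat}(p_X)\le\mathrm{cat}(X\times X)\le\dim(X\times X)$ that you set up in your second half. Your CW-complex case is correct: disjoint contractible thickenings of the open $k$-cells, null-homotopies of both projections on each piece, and assembly over disjoint unions using path-connectedness of $X$ is the standard proof of $\mathrm{cat}\le\dim$ adapted directly to sections of $p_X$, and it covers everything this paper actually needs, since every configuration space considered here deformation retracts onto a finite cube complex.

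The general paracompact case, however, contains a genuine gap, and it is not merely that you defer ``the hard part'': the lemma you invoke, $\mathrm{cat}(Y)\le\dim Y$ for every connected paracompact $Y$, is false, so no refinement-and-partition-of-unity argument can establish it. The Hawaiian earring $H$ is compact metric (hence paracompact) and path-connected, with $\dim(H\times H)=2$; but each circle $C_m\subset H$ is a retract of $H$, and every open neighbourhood of the wild point $w$ contains some $C_m$ entirely, so no such neighbourhood has null-homotopic inclusion and $\mathrm{cat}(H)=\infty$. Likewise no neighbourhood of $(w,w)$ admits a local section of $p_H$ (a section restricted to $C_m\times\{w\}$ would null-homotope $C_m$ in $H$), so $\TC(H)=\infty>\dim(H\times H)$: the proposition itself fails for general path-connected paracompact spaces. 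The missing hypothesis is local contractibility, which Farber's theorem in \cite{Far03} explicitly assumes and which the paper's transcription silently drops. That hypothesis is precisely the input your grouping argument needs: local contractibility supplies an open cover of $X\times X$ by categorical sets; paracompactness and covering dimension $N$ give a locally finite refinement of order at most $N+1$; a partition of unity sorts it into $N+1$ disjoint families; and a disjoint union of categorical sets is categorical because $X\times X$ is path-connected. With ``locally contractible'' added to the hypotheses, your outline (CW case by hand, general case by $\mathrm{secat}\le\mathrm{cat}\le\dim$) becomes a complete and correct proof; without it, no proof exists.
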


Knowing the cohomology ring of a space $X$ can yield lower bounds for
$\TC(X)$ as shown in the following.

\begin{definition}
  Let $X$ be a topological space and $A$ a coefficient ring. A class
  $z\in H^*(X\times X;A)$ is called a \textit{zero-divisor} if the
  pull-back under the diagonal is trivial: $\Delta^*(z)=0$.

  The \emph{zero-divisor cup-length} $zcl_A(X)$ is the length of the
  longest non-trivial product of zero-divisors in $H^*(X\times X;A)$.
\end{definition}

\begin{prop}[\cite{Far03}]\label{lem:lowerbound}
  Let $X$ be a topological space and $A$ a coefficient ring. Then the
  topological complexity of $X$ is bounded below by the zero-divisor
  cup-length:
  \[
  \TC(X)\ge zcl_A(X).
  \]
\end{prop}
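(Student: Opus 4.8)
The plan is to recognize $\TC(X)$ as (the reduced version of) the sectional category of the free path fibration $p_X\colon X^I\to X\times X$ and then run the standard cohomological lower bound for sectional category, adapted to this fibration. The first observation is that the inclusion of constant paths $c\colon X\to X^I$ is a homotopy equivalence, and under the induced isomorphism $H^*(X^I;A)\cong H^*(X;A)$ the pullback $p_X^*\colon H^*(X\times X;A)\to H^*(X^I;A)$ is identified with $\Delta^*\colon H^*(X\times X;A)\to H^*(X;A)$, since $p_X\circ c=\Delta$. Consequently a class $z\in H^*(X\times X;A)$ is a zero-divisor precisely when $p_X^*(z)=0$; that is, the zero-divisors are exactly the kernel ideal of $p_X^*$.

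Now fix an optimal cover: write $k=\TC(X)$ and let $X\times X=U_0\cup\cdots\cup U_k$ with continuous local sections $s_i\colon U_i\to X^I$ satisfying $p_X\circ s_i=\mathrm{incl}_i$. For any zero-divisor $z$ the restriction to $U_i$ vanishes, because $\mathrm{incl}_i^*(z)=s_i^*\,p_X^*(z)=s_i^*(0)=0$. By the long exact sequence of the pair $(X\times X,U_i)$ this means that $z$ lifts to a relative class $\tilde z\in H^*(X\times X,U_i;A)$ mapping to $z$ under the canonical map to $H^*(X\times X;A)$. Thus every zero-divisor admits, for each index $i$, a relative refinement supported away from $U_i$.

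The heart of the argument is the relative cup product. Given zero-divisors $z_0,\dots,z_k$, choose for each $j$ a relative lift $\tilde z_j\in H^*(X\times X,U_j;A)$ as above. The relative cup product on singular cochains gives a map $H^*(X\times X,U_0;A)\otimes\cdots\otimes H^*(X\times X,U_k;A)\to H^*\bigl(X\times X,\,U_0\cup\cdots\cup U_k;A\bigr)$, compatible with the absolute cup product under the natural maps forgetting supports. Sending $\tilde z_0\otimes\cdots\otimes\tilde z_k$ through this map produces a relative class whose image in $H^*(X\times X;A)$ is the product $z_0\cdots z_k$. But $U_0\cup\cdots\cup U_k=X\times X$, so the target is $H^*(X\times X,X\times X;A)=0$, forcing $z_0\cdots z_k=0$. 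Hence every product of $k+1=\TC(X)+1$ zero-divisors vanishes, so the longest nontrivial product of zero-divisors has length at most $k$, giving $zcl_A(X)\le\TC(X)$. The only genuinely delicate point is setting up the relative cup product and verifying its compatibility with the absolute product; over arbitrary coefficient rings and arbitrary $X$ this is handled cleanly at the level of singular cochains, so no additional regularity hypotheses on $X$ are required.
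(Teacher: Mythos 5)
Your proof is correct, and it is essentially the argument of Farber's paper \cite{Far03}, which this proposition simply cites without reproducing a proof: identify zero-divisors with the kernel of $p_X^*$, lift each one to a relative class for the pair $(X\times X, U_i)$ using the local sections, and kill the product of $k+1$ such classes via the relative cup product landing in $H^*(X\times X, X\times X;A)=0$. Your closing remark is also on point: since the $U_i$ are open, the relative cup product on singular cochains needs no extra hypotheses on $X$, so the argument goes through in full generality.
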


Using the previous propositions Farber computed the topological
complexity in the following cases.

\begin{prop}[\cite{Far03}]\label{lem:tc-surface}
  If $\Sigma_g$ is an orientable surface of genus $g\ge2$, then
  \[
  \TC(\Sigma_g)=4.
  \]
\end{prop}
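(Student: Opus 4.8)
The plan is to squeeze $\TC(\Sigma_g)$ between matching upper and lower bounds, both equal to $4$. The upper bound is immediate from \autoref{lem:upperbound}: since $\Sigma_g$ is a closed connected $2$-manifold, it is a path-connected paracompact space (indeed a $2$-dimensional CW complex), so $\dim(\Sigma_g\times\Sigma_g)=4$ and therefore $\TC(\Sigma_g)\le 4$. All the work is in the lower bound, where I would exhibit a nonzero product of four zero-divisors and invoke \autoref{lem:lowerbound}.

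I would work with coefficients in $\Int$ (equally $\Rat$ would do). Recall that $H^*(\Sigma_g)$ has a symplectic basis $a_1,b_1,\dots,a_g,b_g\in H^1$ with $a_ib_i=v$ a fixed generator of $H^2$, and with all other products of basis classes vanishing; in particular $a_i^2=b_i^2=0$ and $a_ia_j=b_ib_j=a_ib_j=0$ for $i\ne j$. Via the Künneth isomorphism $H^*(\Sigma_g\times\Sigma_g)\cong H^*(\Sigma_g)\otimes H^*(\Sigma_g)$, I would associate to each class $u$ the zero-divisor $\bar u=u\otimes 1-1\otimes u$, which satisfies $\Delta^*(\bar u)=u-u=0$. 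Because $g\ge 2$, there are two independent hyperbolic pairs $(a_1,b_1)$ and $(a_2,b_2)$ available, and the candidate product is $\bar{a_1}\,\bar{b_1}\,\bar{a_2}\,\bar{b_2}$.

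The key step is to compute the top-degree part of this four-fold product. Since $\deg\big(\bar{a_1}\bar{b_1}\bar{a_2}\bar{b_2}\big)=4$ and $H^4(\Sigma_g\times\Sigma_g)=H^2\otimes H^2$ is the one-dimensional group spanned by $v\otimes v$, it suffices to extract the coefficient of $v\otimes v$. Expanding $\bar{a_1}\bar{b_1}$ and $\bar{a_2}\bar{b_2}$ with the Koszul signs of the graded tensor product, every mixed term of bidegree $(1,1)\cdot(1,1)$ involves a vanishing cup product such as $a_1a_2$ or $a_1b_2$, so only the contributions $(v\otimes 1)(1\otimes v)$ and $(1\otimes v)(v\otimes 1)$ survive, giving $2\,(v\otimes v)\ne 0$. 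Hence $\zcl_{\Int}(\Sigma_g)\ge 4$, so $\TC(\Sigma_g)\ge 4$ by \autoref{lem:lowerbound}, and combining the two bounds yields $\TC(\Sigma_g)=4$.

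The main obstacle is precisely this cohomological computation: one must track the signs carefully through the four-fold product and verify that all cross-terms outside $v\otimes v$ vanish, which is exactly where the impoverished cup-product structure of a surface is used. The choice of coefficient ring is also essential, since the surviving coefficient $2$ would die over $\Int/2$ and collapse the argument, so one must use $\Int$ or $\Rat$.
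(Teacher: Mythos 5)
Your proof is correct: the upper bound $\TC(\Sigma_g)\le\dim(\Sigma_g\times\Sigma_g)=4$ together with the computation $\bar{a_1}\bar{b_1}\bar{a_2}\bar{b_2}=2\,(v\otimes v)\neq 0$ over $\Int$ (which needs $g\ge 2$ for two hyperbolic pairs, and genuinely fails over $\Int/2$) gives $\zcl_{\Int}(\Sigma_g)\ge 4$ and hence equality. The paper itself states this proposition without proof, citing \cite{Far03}, and your argument is essentially Farber's original one, so there is nothing to add.
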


\begin{prop}[\cite{Far04}]\label{lem:tc-graph}
  If $G$ is a connected graph with first Betti number $b_1(G)$, then
  \[
  \TC(G) =
  \begin{cases}
    0 & \text{if $b_1(G)=0$}\\
    1 & \text{if $b_1(G)=1$}\\
    2 & \text{if $b_1(G)\ge2$.}
  \end{cases}
  \]
\end{prop}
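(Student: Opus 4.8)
The plan is to exploit homotopy invariance to reduce everything to wedges of circles, and then to pin down each case by combining the zero-divisor cup-length lower bound with a matching upper bound. First I would recall that a connected graph $G$ deformation retracts onto the wedge $\bigvee_{r} S^1$ of $r = b_1(G)$ circles: choosing a spanning tree $T \subset G$ and collapsing it is a homotopy equivalence, since $T$ is contractible and the inclusion $T \hookrightarrow G$ is a cofibration. By \autoref{lem:retract} it then suffices to compute $\TC(\bigvee_r S^1)$.

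The case $r = 0$ is immediate: the wedge is a point, hence contractible, so $\TC(G) = 0$. For the remaining cases I would set up cohomology via Künneth. Writing $a_1, \dots, a_r$ for a basis of $H^1(\bigvee_r S^1; \Int)$ (so that all products $a_i a_j$ vanish, the space being one-dimensional), the degree-one part of $H^*(X \times X)$ is spanned by the classes $\alpha_i = a_i \otimes 1$ and $\beta_i = 1 \otimes a_i$, and each $z_i = \alpha_i - \beta_i$ is a zero-divisor since $\Delta^* z_i = a_i - a_i = 0$.

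For $r = 1$ the class $z_1 \neq 0$ already gives $\zcl \geq 1$, hence $\TC(S^1) \geq 1$ by \autoref{lem:lowerbound}; note that $z_1^2 = 0$ here, because the graded-commutativity signs force $\alpha_1 \beta_1 + \beta_1 \alpha_1 = 0$, so this single class cannot do better. The matching upper bound $\TC(S^1) \le 1$ I would prove by an explicit two-set motion planner: on the open set of non-antipodal pairs connect the two points by the unique shorter arc, and on the open set of non-diagonal pairs travel counterclockwise; since the antipodal map on $S^1$ is fixed-point free, these two open sets cover $S^1 \times S^1$, each carrying a continuous section of $p_{S^1}$. For $r \geq 2$ I would instead multiply two distinct zero-divisors: a Künneth computation, keeping track of signs, gives $z_1 z_2 = -\,a_1 \otimes a_2 + a_2 \otimes a_1$, whose two summands are independent basis vectors of $H^2(X \times X) = H^1(X) \otimes H^1(X)$, so $z_1 z_2 \neq 0$ and $\zcl \geq 2$. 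The upper bound is then the dimension estimate of \autoref{lem:upperbound}, since $\dim(X \times X) = 2$, yielding $\TC = 2$.

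The step I expect to require the most care is the lower bound for $r \geq 2$: one must check that the cross terms $a_i \otimes a_j$ with $i \neq j$ genuinely survive and are linearly independent, in contrast with the $r = 1$ computation where the analogous product collapses to zero by the odd-degree sign. Getting the signs right in the cup product on $H^*(X \times X)$ is precisely what distinguishes the $\zcl \geq 2$ case from the $\zcl = 1$ case, and is the crux of the whole statement.
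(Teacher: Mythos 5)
Your proposal is correct, and it is essentially the argument of the cited source: the paper itself gives no proof of this proposition (it is quoted from Farber \cite{Far04}), and Farber's proof is exactly your combination of the spanning-tree collapse, the zero-divisor computation $z_1z_2 = -a_1\otimes a_2 + a_2\otimes a_1 \neq 0$ with the dimension upper bound for $b_1(G)\ge 2$, and the explicit two-set motion planner on $S^1$ for $b_1(G)=1$. Your sign bookkeeping (in particular $z_1^2=0$ versus $z_1z_2\neq 0$) is the right crux and is carried out correctly.
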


\section{Configuration spaces of graphs}

For a topological space $X$ and a finite set $S$ we define the
\emph{configuration space of $X$ with particles labelled by $S$} as
\[ \Conf_S(X) \defeq \left\{f\colon S\to X \text{ injective} \right\}
\subset \map(S, X). \] For $n\in\Nat$ we write $\mathbf{n}\defeq \{1,
2, \ldots, n\}$ and $\Conf_n(X)\defeq \Conf_{\mathbf{n}}(X)$.  This is
usually called the $n$-th ordered configuration space of $X$.  Let $G$
be a finite connected graph (i.e.\ a connected 1-dimensional CW
complex with finitely many cells). We are interested in the
topological complexity of configurations of $n$ ordered particles in
$G$, that is, $\TC(\Conf_n(G))$.

Unless explicitly stated, we assume without loss of generality that
none of the graphs have vertices of valence 2.

A main ingredient in our computations is a modified configuration
space in which particles can collide in some parts of the graph.  This
construction was introduced in \cite{CheLue16} and allows taking
quotients of the underlying space of a configuration space in the
following way.

For a number $n\in\Nat$, a graph $G$ and a subset $W$ of $G$'s
vertices define the following configuration space with sinks:
\begin{equation*}
  \Conf_{n}(G,W) = \left\{(x_1,\ldots,x_n) \in G^n \,|\, \text{for $i\neq j$ either $x_i\neq x_j$ or
      $x_i=x_j\in W$}\right\}.
\end{equation*}
Looking at a collapse map $G\to G/H$ for a subgraph $H\subset G$,
there is now an induced map on configuration spaces if we turn the
image of $H$ under $G\to G/H$ into a sink:
\[ \Conf_n(G) \to \Conf_{n}(G/H, H/H).\]

\subsection{A combinatorial model}
For unordered and ordered configuration spaces of graphs there are
combinatorial models due to Abrams (\cite{Abrams00}), Ghrist
(\cite{Ghrist01}), Świątkowski (\cite{Swiat01}) and the first author
(\cite{Luetgehetmann14}).  In \cite{CheLue16}, a combinatorial model
for configuration spaces \emph{with sinks} inspired by the latter two
models was constructed.  This model is a deformation retract of the
configuration space with the structure of a cube complex.

\begin{definition}[{Cube Complex, see \cite[Definition I.7.32]{bh10}}]
  A cube complex $K$ is the quotient of a disjoint union of cubes
  $X=\bigsqcup_{\lambda\in\Lambda}[0,1]^{k_\lambda}$ by an equivalence
  relation $\sim$ such that the quotient map $p\colon X\to X/\!\!\sim\
  = K$ maps each cube injectively into $K$ and we only identify faces
  of the same dimensions by an isometric homeomorphism.
\end{definition}
\begin{rem}
  The definition above differs slightly from the original definition
  by Bridson and Häfliger, in that it allows two cubes to be
  identified along more than one face.
\end{rem}

\begin{prop}[{\cite[Proposition 2.3, p. 4]{CheLue16}}]\label{prop:combinatorial-model}
  Let $G$ be a finite graph, $W$ a subset of the vertices and
  $n\in\Nat$.  Then $\Conf_{n}(G,W)$ deformation retracts to a finite
  cube complex of dimension $\min\{n, |V_{\ge2}| + |E_W| \}$, where
  $V_{\ge 2}$ is the set of non-sink vertices of $G$ of valence at
  least two and $E_W$ is the set of edges incident to two sinks.
\end{prop}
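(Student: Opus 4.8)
The plan is to build a finite cube complex $\mathcal{K} = \mathcal{K}_n(G,W)$ combinatorially, realise it as a subspace of $\Conf_n(G,W)$, and exhibit a deformation retraction of $\Conf_n(G,W)$ onto that image; the asserted dimension is then read off directly from the combinatorial data. The model generalises the Świątkowski and Abrams complexes for ordinary (sink-free) configuration spaces. A \emph{state} records the positions of the $n$ particles relative to the graph: each particle either sits at a vertex, sits in the interior of a fixed edge, or is \emph{in transit} (in the process of moving between a vertex and an incident edge). I impose that no two particles occupy the same position unless that position is a sink in $W$, and that at each non-sink vertex at most one particle is in transit. The $0$-cubes are the states with no particle in transit; a $1$-cube is a single elementary transit move (one particle sliding between a vertex and an adjacent edge); and a $k$-cube is a collection of $k$ elementary moves that can be carried out simultaneously and independently, i.e.\ whose transit data are supported at pairwise distinct non-sink vertices or at distinct sink-incident edges. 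Gluing the resulting cubes along their shared faces produces $\mathcal{K}$, which is a cube complex in the sense defined above (allowing, as in the Remark, identifications along more than one face).

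Next I would define a continuous realisation map $|\mathcal{K}| \to \Conf_n(G,W)$ by interpreting a transit coordinate $t\in[0,1]$ as the actual position of the moving particle along its edge, with $t=0,1$ giving the endpoint states; this is injective off the identified faces. Conversely, I would construct a deformation retraction of $\Conf_n(G,W)$ onto the image by an Abrams-style flow: particles lying in the interior of an edge are pushed to canonical discretised positions or into transit states near the essential vertices, and this flow is run simultaneously and continuously over all of $\Conf_n(G,W)$. The new feature relative to the sink-free case is that, near a sink $w\in W$, the flow is allowed to carry several particles onto $w$ and to keep them coincident there; one must check that the flow remains well defined and continuous precisely because collisions are permitted at sinks. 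Standard compactness and local-triviality arguments then upgrade this to a strong deformation retraction.

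For the dimension, observe that the dimension of a top cube equals the number of particles simultaneously in transit. A transiting particle occupies a \emph{resource} that cannot be shared: either (i) a non-sink vertex of valence at least two, at which at most one particle can be in transit at a time, since two simultaneous transits there would force a collision away from a sink; or (ii) an edge both of whose endpoints are sinks, where a local analysis (collisions are now permitted at both ends, collapsing the multi-particle freedom on the edge to a single sliding coordinate) shows at most one independent transit direction. Non-sink vertices of valence at most one, and edges with a non-sink endpoint, contribute no transit freedom beyond that already counted. Summing gives at most $|V_{\ge 2}| + |E_W|$ simultaneous transits, and trivially at most $n$, so $\dim \mathcal{K} \le \min\{n, |V_{\ge 2}| + |E_W|\}$. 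To see that this is sharp, I would exhibit a single state in which $\min\{n, |V_{\ge 2}| + |E_W|\}$ particles are in transit at distinct resources, realising a cube of that exact dimension; this requires only that $G$ and $W$ contain enough vertices and sink-edges, which is precisely what the minimum encodes.

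The main obstacle is the deformation retraction in the presence of sinks: away from $W$ the classical discretisation arguments apply essentially verbatim, but one must verify carefully that the retracting flow is continuous and well defined at and near the sink vertices, where particles are allowed to coincide, so that the stratification of $\Conf_n(G,W)$ by coincidence patterns at sinks is respected throughout the homotopy. A secondary difficulty is the bookkeeping for the sink-edge contribution $|E_W|$: one has to confirm that such an edge really contributes exactly one transit dimension and that this does not double-count with the vertex resources at its (sink) endpoints, which is exactly why only non-sink vertices are counted in $V_{\ge 2}$.
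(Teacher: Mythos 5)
Your overall architecture (a \'Swi\polhk{a}tkowski-type cube complex of discretised states, realised as a subspace, plus a discretising flow and a ``resource'' count for the dimension) matches the model the paper describes --- note the paper does not reprove this proposition but cites \cite{CheLue16} and sketches the model right after the statement. The genuine gap is your treatment of sink-incident edges, which is precisely where the sink model must differ from the classical Abrams/\'Swi\polhk{a}tkowski one. In the correct model no particle ever \emph{rests} in the interior of an edge incident to a sink: a particle moving from a non-sink vertex onto such an edge slides directly into the sink, and on an edge joining two sinks the elementary move runs from one sink all the way to the other (this single sliding coordinate is the $E_W$ resource). Your states instead allow particles to sit at discretised positions in the interiors of sink-incident edges, with elementary transits between a sink and such an interior position. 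You flag the sink behaviour as ``the main obstacle'' to be checked later, but the problem is not an unverified step: the complex you have defined is already the wrong one.

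This fails in two concrete ways. First, your complex is not a deformation retract. Take $G=[0,1]$, $W=\{0,1\}$, $n=2$: your $0$-cubes are the four states with both particles on sinks, the four states with one particle at a sink and one at the midpoint, and the two states $(1/3,2/3)$ and $(2/3,1/3)$; your $1$-cubes are the twelve sink$\leftrightarrow$edge transits, and there are no $2$-cubes since there is only one (sink-incident) edge. This is a connected graph with $10$ vertices and $12$ edges, hence first Betti number $3$, whereas $\Conf_2([0,1],\{0,1\})$ is homotopy equivalent to a circle (see \autoref{ex:sinks-interval}); so no deformation retraction onto your complex can exist --- the extra resting states create spurious $1$-cycles that are null-homotopic in the configuration space. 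Second, the dimension bound breaks: an edge with exactly one sink endpoint becomes an independent resource in your model. For the Y-graph with centre $v$ and one leaf vertex turned into a sink $w$, with $n=2$, the transit of particle $1$ between $w$ and the edge $wv$ together with the transit of particle $2$ at $v$ onto a different edge is, by your independence rule, a legitimate $2$-cube, while $|V_{\ge2}|+|E_W|=1$. Your assertion that edges with a non-sink endpoint ``contribute no transit freedom beyond that already counted'' is true in the absorption model but false in yours. Both failures are cured by the same missing idea: particles on sink-incident edges must be absorbed into the sinks, so such edges carry no resting states, and an edge contributes a cube coordinate only when both of its endpoints are sinks.
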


The basic idea of the combinatorial model is to keep all particles on
any single edge equidistant at all times.  Moving one of the outmost
particles from an edge to an empty essential non-sink vertex is then
given by decreasing the distance of this particle from the vertex
while simultaneously increasing the distance between the particles on
this edge.  Once the particle reaches the vertex, all remaining
particles on the edge will be equidistant again.

More formally, the 0-cubes of the combinatorial model are all those
configurations where all particles in the interior of each edge cut
the edge into pieces of equal length and no particle is in the
interior of any edge incident to one or two sink
vertices. There are no particles on degree 1 vertices which are not sinks.
A $k$-dimensional cube is given by
choosing such a $0$-cell, $k$ distinct particles sitting on distinct
vertices and for each of those particles an edge incident to the
corresponding vertex.  The $i$-th dimension of the cube $[0,1]^k$ then
corresponds to moving the $i$-th of those $k$ particles from their
position on the vertex onto the edge, where at time zero the particle
is on the vertex and at time 1 it is on the edge.  Remember that if
there are already particles on the edge then they continuously squeeze
together to make room for the new particle (it is also possible that
two particles move onto the same edge from different sides).  In the
case where the particle moves onto an edge whose other terminal vertex
is a sink vertex, the particle moves directly into the sink instead.
Such a choice of $k$ movements determines a $k$-cube if and only if we
can realize the movements independently, namely if no two particles
move towards the same non-sink vertex and no two particles move along
the same edge incident to two sink vertices.  This describes the cube
complex as a subspace of the configuration space.

Each non-sink (essential) vertex can only be involved in one of those
combinatorial movements at the same time, so the dimension of this
cube complex is bounded above by the number of essential vertices plus
the number of edges between sink vertices.

\begin{ex}
  The combinatorial model of $\Conf_2(Y)$ for the graph $Y$ shaped
  like the letter Y is given by a circle with six leaves attached to
  it.  More precisely it consists of six univalent vertices, six
  vertices of valence two, six vertices of valence three and eighteen
  edges, see \autoref{fig:conf_2_Y}.  In particular, we have
  $H_1(\Conf_2(Y)) \cong \Z$.
  Remember that particles only move towards vertices of valence at
  least 2 (or sink vertices), so in this example there is only one vertex
  towards which any particle can move.

  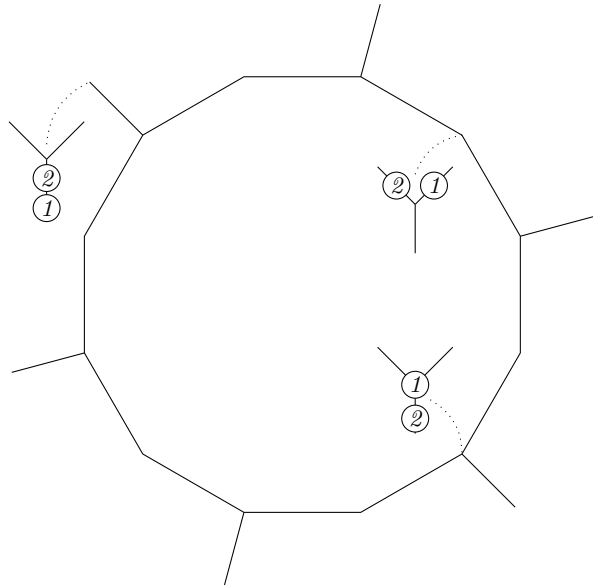
\begin{figure}[htpb]
    \centering
    \begin{tikzpicture}
      \newcommand{\Graph}{ \draw (0, 0) -- (0, -1.3); \draw (0, 0) --
        (1, 1); \draw (0, 0) -- (-1, 1); }

      \draw[-] ($ (0, 0) + (15: 3) $) -- ($ (0, 0) + (45: 3) $) -- ($
      (0, 0) + (75: 3) $) -- ($ (0, 0) + (105: 3) $) -- ($ (0, 0) +
      (135: 3) $) -- ($ (0, 0) + (165: 3) $) -- ($ (0, 0) + (195: 3)
      $) -- ($ (0, 0) + (225: 3) $) -- ($ (0, 0) + (255: 3) $) -- ($
      (0, 0) + (285: 3) $) -- ($ (0, 0) + (315: 3) $) -- ($ (0, 0) +
      (345: 3) $) -- cycle;

      \draw[-] ($ (0, 0) + (15: 3) $) -- ($ (0, 0) + (15: 4) $);
      \draw[-] ($ (0, 0) + (75: 3) $) -- ($ (0, 0) + (75: 4) $);
      \draw[-] ($ (0, 0) + (135: 3) $) -- ($ (0, 0) + (135: 4) $);
      \draw[-] ($ (0, 0) + (195: 3) $) -- ($ (0, 0) + (195: 4) $);
      \draw[-] ($ (0, 0) + (255: 3) $) -- ($ (0, 0) + (255: 4) $);
      \draw[-] ($ (0, 0) + (315: 3) $) -- ($ (0, 0) + (315: 4) $);

      \begin{scope}[shift={(1.5, -1.2)}, scale=0.5]
        \Graph
        \particleNr{(0, 0)}{1};
        \particleNr{(0, -0.9)}{2};
      \end{scope}

      \begin{scope}[shift={(1.5, 1.2)}, scale=0.5]
        \Graph
        \particleNr{(0.5, 0.5)}{1};
        \particleNr{(-0.5, 0.5)}{2};
      \end{scope}

      \begin{scope}[shift={(-3.4, 1.8)}, scale=0.5]
        \Graph
        \particleNr{(0, -1.3)}{1};
        \particleNr{(0, -.5)}{2};
      \end{scope}

      \draw[dotted] (-3.4, 2) to[bend left] ($ (0, 0) + (135: 4) $);
      \draw[dotted] (1.7, -1.4) to[bend left] ($ (0, 0) + (315: 3) $);
      \draw[dotted] (1.5, 1.6) to[bend left] ($ (0, 0) + (45: 3) $);
    \end{tikzpicture}

    \caption{The combinatorial model of $\Conf_2(Y)$. Each edge
      corresponds to the movement of a single particle from the
      essential vertex onto one of the three edges. Moving along the
      embedded circle the two particles move alternatingly onto the
      edge that is not occupied by the other particle.}
    \label{fig:conf_2_Y}
  \end{figure}
\end{ex}

For the interval with two sinks we first consider the cases of few
particles. In the case with only one particle there is no
1-dimensional class and with two particles there is precisely one
1-class: both particles sit on the first sink, particle 1 moves to the
second sink, particle 2 follows, particle 1 returns to the first sink
and finally also particle 2 moves back to the first sink, see
\autoref{fig:comb-model-interval}.

\begin{ex} \label{ex:sinks-interval}
  In \autoref{fig:comb-model-interval} the combinatorial model of
  $\Conf_2([0,1], \{0,1\})$ is visualized.  It is a circle constructed
  by gluing four edges together.

  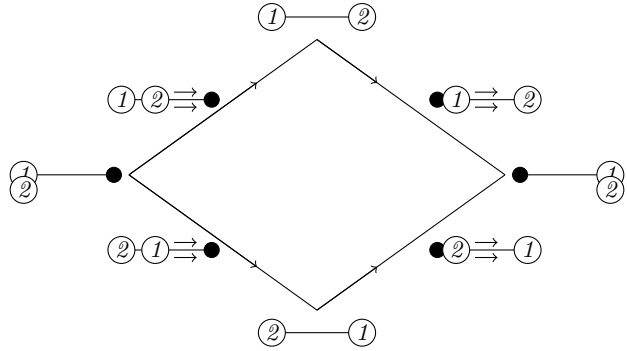
\begin{figure}[ht]
    \begin{center}
      \begin{tikzpicture}
        \newcommand{\Graph}{ \draw (0, 0) -- (1.2, 0); \filldraw (0,0)
          circle (.1cm); \filldraw (1.2,0) circle (.1cm); } \draw
        (-2.5, 0) -- (0, 1.8) -- (2.5, 0); \draw (-2.5, 0) -- (0,
        -1.8) -- (2.5, 0); \draw[->] (-2.5, 0) -- (-0.8, 1.224);
        \draw[->] (0, 1.8) -- (0.8, 1.224);

        \draw[->] (-2.5, 0) -- (-0.8, -1.224); \draw[->] (0, -1.8) --
        (0.8, -1.224);

        \begin{scope}[shift={(-3.9, 0)}]
          \Graph
          \particleNr{(0, 0)}{1}
          \particleNr{(0, -.2)}{2}
        \end{scope}

        \begin{scope}[shift={(-2.6, 1.0)}]
          \Graph
          \particleNr{(0, 0)}{1}
          \particleNr{(.45, 0)}{2}
          \draw[->] (0.7, 0.1) -- (1.0, 0.1); \draw[->] (0.7, -0.1) --
          (1.0, -0.1);
        \end{scope}

        \begin{scope}[shift={(-2.6, -1.0)}]
          \Graph
          \particleNr{(0, 0)}{2}
          \particleNr{(.45, 0)}{1}
          \draw[->] (0.7, 0.1) -- (1.0, 0.1); \draw[->] (0.7, -0.1) --
          (1.0, -0.1);
        \end{scope}

        \begin{scope}[shift={(-.6, 2.1)}]
          \Graph
          \particleNr{(0, 0)}{1}
          \particleNr{(1.2, 0)}{2}
        \end{scope}

        \begin{scope}[shift={(-.6, -2.1)}]
          \Graph
          \particleNr{(0, 0)}{2}
          \particleNr{(1.2, 0)}{1}
        \end{scope}

        \begin{scope}[shift={(1.6, 1.0)}]
          \Graph
          \particleNr{(.25, 0)}{1}
          \particleNr{(1.2, 0)}{2}
          \draw[->] (0.5, 0.1) -- (0.8, 0.1); \draw[->] (0.5, -0.1) --
          (0.8, -0.1);
        \end{scope}

        \begin{scope}[shift={(1.6, -1.0)}]
          \Graph
          \particleNr{(.25, 0)}{2}
          \particleNr{(1.2, 0)}{1}
          \draw[->] (0.5, 0.1) -- (0.8, 0.1); \draw[->] (0.5, -0.1) --
          (0.8, -0.1);
        \end{scope}

        \begin{scope}[shift={(2.7, 0)}]
          \Graph
          \particleNr{(1.2, 0)}{1}
          \particleNr{(1.2, -.2)}{2}
        \end{scope}
      \end{tikzpicture}
    \end{center}
    \caption{The combinatorial model of $\Conf_2([0,1], \{0, 1\})$
      consists of four edges.}\label{fig:comb-model-interval}
  \end{figure}
\end{ex}

\section{Fully articulated graphs}

\begin{definition}
  A vertex of a connected graph is called an \emph{articulation} if
  removing it makes the graph disconnected.  A connected graph is
  \emph{fully articulated} if all essential vertices are
  articulations.
\end{definition}

Every tree is fully articulated, but the class of fully articulated
graphs is much larger than just trees. For instance, every graph can
be turned into a fully articulated graph by adding a leaf or a loop at
every essential vertex.

\begin{figure}[h]
  \centering
  \includegraphics[scale=0.5]{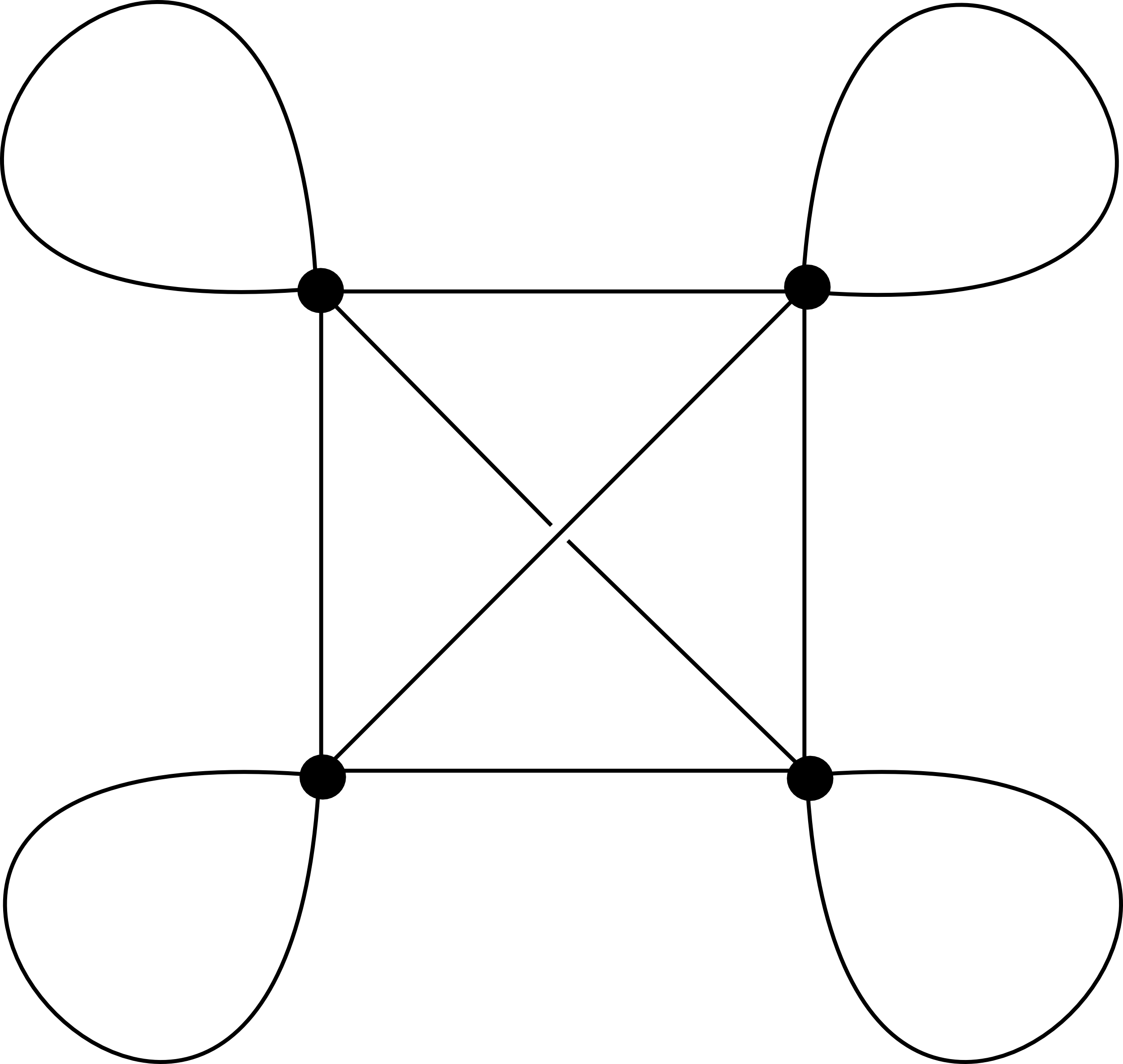}
  \caption{A fully articulated graph.}
  \label{fig:articulated}
\end{figure}

The number of articulations of a graph yields a lower bound for the
topological complexity of configuration spaces on that graph:

\begin{thm}\label{thm:tc-highly-articulated}
  Let $G$ be a graph with $|V_{\ge3}|$ essential vertices, $m\ge2$ of which are
  articulations, and let $n\ge4$. Then
  \[
  2 \min\{\lfloor n/2\rfloor,m\}\le\TC(\Conf_n(G)) \le 2\min\{n,|V_{\ge3}|\}.
  \]
\end{thm}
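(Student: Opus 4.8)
The plan is to establish the two inequalities separately. The upper bound will follow from the dimension of the combinatorial model, while the lower bound will come from exhibiting a long nonzero product of zero-divisors in $H^*(\Conf_n(G)\times\Conf_n(G))$, detected through collapse maps to configuration spaces with sinks.

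For the upper bound I would apply \autoref{prop:combinatorial-model} with empty sink set $W=\emptyset$. Since we assume $G$ has no vertices of valence $2$, the set $V_{\ge2}$ coincides with the essential vertices and there are no edges between sinks, so $\Conf_n(G)$ deformation retracts onto a finite cube complex of dimension $\min\{n,|V_{\ge3}|\}$. By homotopy invariance (\autoref{lem:retract}) together with the dimension estimate (\autoref{lem:upperbound}), and because the product of two CW complexes of dimension $d$ has dimension $2d$, we obtain $\TC(\Conf_n(G))\le 2\min\{n,|V_{\ge3}|\}$.

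The lower bound is the substantial part. Set $k=\min\{\lfloor n/2\rfloor,m\}$ and fix $k$ distinct articulation vertices $v_1,\dots,v_k$; this is possible since $k\le m$, and we may reserve two particles for a neighbourhood of each $v_i$ because $2k\le 2\lfloor n/2\rfloor\le n$. For each $i$, the articulation property lets me collapse the complement of a small star around $v_i$ onto sinks, producing a map from $\Conf_n(G)$ to a local configuration space with sinks $\Conf_n(L_i,W_i)$ in which the particles away from $v_i$ have been absorbed. The aim is to arrange $L_i$ (a star at $v_i$ whose outer endpoints are turned into sinks) so that the two reserved particles, which may pass one another through the sinks as in \autoref{ex:sinks-interval}, generate two one-dimensional classes $a_i,b_i$ with $a_i\smile b_i\neq 0$ — the same nonvanishing-cup-product phenomenon underlying \autoref{lem:tc-surface}. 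Pulling these back gives classes in $H^1(\Conf_n(G))$ and hence zero-divisors $\bar a_i,\bar b_i\in H^1(\Conf_n(G)\times\Conf_n(G))$, to which \autoref{lem:lowerbound} applies.

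The main obstacle is showing that the full product $\prod_{i=1}^k \bar a_i\,\bar b_i$ is nonzero, which yields $\TC(\Conf_n(G))\ge 2k$. Here the articulation hypothesis is essential: because removing $v_i$ disconnects $G$, the local dances near the chosen vertices can be made to occur in pairwise disjoint regions, so that the classes factor through a single map $\Conf_n(G)\to\prod_i \Conf_n(L_i,W_i)$. A K\"unneth argument should then reduce nonvanishing of the global product to the local statements $a_i\smile b_i\neq0$, exactly in the spirit of Farber's tree computation in \cite{Far05}. The two delicate points I expect to confront are verifying that the sink models genuinely contribute two commuting classes with nonzero cup product — rather than a single circle as in the plain $\Conf_2(Y)$ of \autoref{fig:conf_2_Y} — and checking that the reserved particle pairs do not obstruct one another when several articulations are activated simultaneously.
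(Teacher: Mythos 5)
Your upper bound is fine and matches the paper: the sink-free combinatorial model of \autoref{prop:combinatorial-model} has dimension $\min\{n,|V_{\ge3}|\}$, and \autoref{lem:retract} plus \autoref{lem:upperbound} give $\TC(\Conf_n(G))\le 2\min\{n,|V_{\ge3}|\}$. The lower bound, however, has a genuine gap, and it sits exactly at the point you flag as the ``aim'': you want, for each articulation $v_i$, two classes $a_i,b_i\in H^1$ of the local sink model of the \emph{same} reserved pair of particles with $a_i\smile b_i\neq 0$. This is impossible. The local space you describe (two particles on a star around $v_i$ whose outer endpoints are sinks) has, by \autoref{prop:combinatorial-model}, a combinatorial model of dimension $\min\{2,\,1+0\}=1$: it is homotopy equivalent to a graph, so every cup product of positive-degree classes vanishes. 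The analogy with \autoref{lem:tc-surface} is misplaced; no surface-like cohomology appears locally at an articulation (in this paper that phenomenon only occurs for banana graphs). Moreover the repair is not just technical: with one fixed pair per vertex, the relevant torus $\prod_i\Conf_2(G_i)\to\Conf_n(G)$ has each factor $\Conf_2(G_i)\simeq S^1$, so $a_i$ and $b_i$ restrict to proportional classes on it, and a product such as $\bar a_i\bar b_i=-a_i\times b_i+b_i\times a_i$ (using $a_ib_i=0$) evaluates to zero on $z\times z$ for the single available torus class $z$. A K\"unneth argument with one pair per vertex therefore cannot produce a nonzero product of length $2d$.

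The missing idea, which is Farber's trick and the paper's route, is to use \emph{two different pairs of particles per articulation} rather than two different classes per pair. The paper defines maps $\Phi_{i,j}\colon\Conf_n(G)\to\Conf_2(\overline G_i,W_i)$ sending the $j$-th pair $(x_{2j-1},x_{2j})$ into the quotient around $v_i$, and takes $u_{i,j}=\Phi_{i,j}^*(x_i)$, so each (vertex, pair) combination contributes only a single class. The product of $2d$ zero-divisors is $\prod_i\bar u_{i,i}\cdot\prod_i\bar u_{i,i+1}$ (indices mod $d$), mixing the ``diagonal'' assignment of pairs to vertices with the cyclic shift, and it is detected by evaluating against $z\times z_{\sh}$, where $z$ and $z_{\sh}$ are the images of the fundamental classes of the torus under the two embeddings $\psi_{\id}$ and $\psi_{\sh}$ placing pair $j$ at vertex $v_j$, respectively $v_{j-1}$. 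The key computation (via \autoref{prop:homology-injection}) is that $\langle\prod_i u_{j_i,k_i},z\rangle\neq 0$ forces $j_i=k_i$ for all $i$, because any $1$-cycle in $\Conf_2(\overline G_i,W_i)$ avoiding the unique non-sink vertex is nullhomologous; this singles out exactly one surviving summand in the expansion. Your proposal would need this mechanism (or a genuinely new one) to close the gap; as written, the claimed local cup product does not exist and the product of zero-divisors you build cannot be shown nonzero.
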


The following two theorems will follow from Theorem \ref{thm:tc-highly-articulated}.

\begin{thmA}\label{thm:tc-separable-graphs}
  Let $G$ be a fully articulated graph with at least one essential
  vertex. Further assume that $G$ is not homeomorphic to the letter Y. Then the
  topological complexity of $\Conf_n(G)$ for $n\ge 2|V_{\ge3}|$ is given by
  \[
  \TC(\Conf_n(G)) = 2|V_{\ge3}|.
  \]
  If $G$ is homeomorphic to the letter Y, then
  \[
  \TC(\Conf_n(G)) =
  \begin{cases}
    1  & \text{if $n=2$}\\
    2 & \text{if $n\ge3$}.
  \end{cases}
  \]
%
\end{thmA}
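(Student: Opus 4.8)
The plan is to derive Theorem A from Theorem \ref{thm:tc-highly-articulated} by observing that in a fully articulated graph, every essential vertex is an articulation, so the parameter $m$ (number of articulations) equals $|V_{\ge3}|$. Under the hypothesis $n\ge 2|V_{\ge3}|$, we have $\lfloor n/2\rfloor \ge |V_{\ge3}| = m$, so the lower bound $2\min\{\lfloor n/2\rfloor, m\}$ collapses to $2m = 2|V_{\ge3}|$. Similarly, since $n \ge 2|V_{\ge3}| \ge |V_{\ge3}|$, the upper bound $2\min\{n,|V_{\ge3}|\}$ collapses to $2|V_{\ge3}|$. Thus both bounds coincide at $2|V_{\ge3}|$, forcing equality. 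This handles the generic case cleanly, modulo the hypotheses of Theorem \ref{thm:tc-highly-articulated}.

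The main obstacle is that Theorem \ref{thm:tc-highly-articulated} requires $m \ge 2$ and $n \ge 4$, neither of which is guaranteed by the hypotheses of Theorem A. I would dispose of these boundary cases separately. When $|V_{\ge3}| \ge 2$ and $n \ge 2|V_{\ge3}| \ge 4$, both hypotheses hold and the argument above applies directly. The remaining case is $|V_{\ge3}| = 1$, i.e.\ $G$ has a single essential vertex. Here the target value is $2|V_{\ge3}| = 2$, and I must verify this by hand. A connected fully articulated graph with exactly one essential vertex, after suppressing valence-2 vertices, is a wedge of edges and loops at that vertex; the hypothesis that $G$ is not homeomorphic to the letter Y excludes precisely the minimal tree case (three edges, no loops).

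For the single-essential-vertex case I would combine the general upper bound from \autoref{lem:upperbound} with a cohomological lower bound via \autoref{lem:lowerbound}. By \autoref{prop:combinatorial-model}, $\Conf_n(G)$ deformation retracts to a cube complex of dimension $\min\{n, |V_{\ge2}|\}$; with one essential vertex and $n \ge 2$ this dimension is $1$, so $\Conf_n(G)$ is homotopy equivalent to a graph, and \autoref{lem:tc-graph} applies. One then reads off $\TC$ from the first Betti number $b_1(\Conf_n(G))$. For all non-$Y$ graphs with one essential vertex and $n \ge 2$, the configuration space has $b_1 \ge 2$, giving $\TC = 2 = 2|V_{\ge3}|$ as required. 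The explicit $Y$ computation is separate: when $n=2$ the space $\Conf_2(Y)$ is homotopy equivalent to a circle (as in \autoref{fig:conf_2_Y}), so $b_1 = 1$ and $\TC = 1$; when $n \ge 3$ one checks $b_1(\Conf_n(Y)) \ge 2$, yielding $\TC = 2$.

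The delicate computational step will be verifying the Betti-number inequalities for configuration spaces on one-essential-vertex graphs, since $\Conf_n(Y)$ sits exactly at the boundary ($b_1 = 1$ for $n=2$), and the argument must confirm that \emph{every} other such graph—whether obtained by adding leaves or loops—pushes $b_1$ to at least $2$. I expect this to follow from a direct analysis of the combinatorial model: additional edges or loops at the essential vertex create extra independent cycles in $\Conf_n(G)$, so the $Y$ graph with $n=2$ is the unique exceptional configuration.
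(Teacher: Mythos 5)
Your proposal follows essentially the same route as the paper's proof: for $|V_{\ge3}|\ge 2$ you reduce to Theorem \ref{thm:tc-highly-articulated} (in a fully articulated graph $m=|V_{\ge3}|$, and under $n\ge 2|V_{\ge3}|$ both the lower and upper bounds collapse to $2|V_{\ge3}|$), and you handle $|V_{\ge3}|=1$ separately by showing $\Conf_n(G)$ is homotopy equivalent to a graph and applying \autoref{lem:tc-graph}.

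However, the step you yourself flag as ``the delicate computational step'' is a genuine loose end as written: you assert, but do not prove, that $b_1(\Conf_n(G))\ge 2$ for every one-essential-vertex graph other than Y with $n\ge 2$, and for Y itself with $n\ge 3$. Saying you ``expect this to follow from a direct analysis of the combinatorial model'' is not an argument, and this count is precisely the nontrivial computation of Ghrist and Lütgehetmann that the paper invokes as \autoref{prop:rank-configuration}: writing $G\simeq Y^l_k$ (a wedge of $k$ intervals and $l$ circles with $k+2l\ge 3$), the space $\Conf_n(Y^l_k)$ is homotopy equivalent to a graph with first Betti number $1+\frac{(n+k+l-2)!}{(k+l-1)!}\left(n(k+2l-2)-(k+l)+1\right)$, which equals $1$ exactly when $(k,l,n)=(3,0,2)$ (the letter Y with two particles) and is at least $2$ in every other admissible case. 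Citing this proposition closes the gap, and it also makes your dimension argument via \autoref{prop:combinatorial-model} unnecessary, since the proposition already supplies the graph homotopy type. With that substitution your proof coincides with the paper's.
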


\setcounter{thmA}{19}
\begin{thmA}\label{thm:tc-trees}
  Let $T$ be a tree which is not homeomorphic to an interval or to the letter Y. Then
  the topological complexity of $\Conf_n(T)$ is given by
  \[
  \TC(\Conf_n(T)) = 2 \min\{\lfloor n/2\rfloor,|V_{\ge3}|\}.
  \]
\end{thmA}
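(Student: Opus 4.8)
The plan is to read off the lower bound from Theorem~\ref{thm:tc-highly-articulated} and to obtain the matching upper bound from a sharpened estimate on the homotopy dimension of $\Conf_n(T)$. First I would record why trees are the tame case of Theorem~\ref{thm:tc-highly-articulated}: since we assume $T$ has no vertices of valence $2$, deleting any vertex of valence $\ge 3$ disconnects the tree, so every essential vertex is an articulation and the number of articulations is $m=|V_{\ge3}|=:N$. Thus for $n\ge 4$ and $N\ge 2$ Theorem~\ref{thm:tc-highly-articulated} already yields
\[
  \TC(\Conf_n(T))\ \ge\ 2\min\{\lfloor n/2\rfloor,\,N\},
\]
which is exactly the claimed value as a lower bound. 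When $n\ge 2N$ this lower bound equals $2N$ and the full equality is \autoref{thm:tc-separable-graphs} applied to the fully articulated graph $T$ (which by hypothesis is neither an interval nor a $Y$); so the substance lies in the regime $n<2N$, where $\min\{\lfloor n/2\rfloor,N\}=\lfloor n/2\rfloor$.

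The crux is the upper bound $\TC(\Conf_n(T))\le 2\lfloor n/2\rfloor$ in this regime. The combinatorial model of \autoref{prop:combinatorial-model} has dimension $\min\{n,N\}$, which strictly exceeds $\lfloor n/2\rfloor$ here, so \autoref{lem:upperbound} only gives the too-weak bound $2\min\{n,N\}$. To close the gap I would prove that $\Conf_n(T)$ has the homotopy type of a CW complex of dimension $\min\{\lfloor n/2\rfloor,N\}$. The underlying heuristic is that a top-dimensional cell must simultaneously move particles off several essential vertices, and each activated vertex must also have a particle parked on one of its branches to make the corresponding loop non-contractible; activating a vertex therefore costs two particles, so at most $\lfloor n/2\rfloor$ vertices can be active at once. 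I would make this rigorous by running discrete Morse theory on the cube model in the style of Farley--Sabalka (or, equivalently, by collapsing branches into sinks and pushing the surplus particles into them), checking that every critical cell has dimension at most $\lfloor n/2\rfloor$. Granting this, \autoref{lem:retract} and \autoref{lem:upperbound} give $\TC(\Conf_n(T))\le 2\min\{\lfloor n/2\rfloor,N\}$ for all $n$, matching the lower bound.

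It remains to mop up the cases left open above, all of which reduce to graphs via the same homotopy-dimension statement. When $N=1$ (a star of valence $\ge 4$, since $T\neq Y$) Theorem~\ref{thm:tc-highly-articulated} does not apply, but for $n\ge 2$ the space $\Conf_n(T)$ is then homotopy equivalent to a graph of dimension $\min\{\lfloor n/2\rfloor,1\}=1$ with $b_1\ge 2$, so \autoref{lem:tc-graph} gives $\TC=2=2\min\{\lfloor n/2\rfloor,1\}$. The small counts $n\le 3$ (outside the range of Theorem~\ref{thm:tc-highly-articulated}) are handled the same way: $\Conf_1(T)\simeq\ast$ gives $\TC=0$, while for $n\in\{2,3\}$ the homotopy dimension is $1$ and two independent loops (coming from two distinct essential vertices, or from a single vertex of valence $\ge 4$) force $b_1\ge 2$, hence $\TC=2$. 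The excluded interval and $Y$ are precisely the configurations where this fails: $\Conf_n$ of an interval is disconnected, and $\Conf_2(Y)$ is a single circle with $b_1=1$.

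The main obstacle I anticipate is the homotopy-dimension estimate, since the drop from the naive bound $\min\{n,N\}$ to $\min\{\lfloor n/2\rfloor,N\}$ is a global feature of how homology is created rather than something visible on individual cubes. Organizing the Morse matching so that no critical cell ever activates more than $\lfloor n/2\rfloor$ vertices, uniformly across all admissible trees, is where I expect the bulk of the careful bookkeeping to lie.
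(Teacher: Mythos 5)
Your proposal is correct and follows essentially the same route as the paper: the lower bound from Theorem~\ref{thm:tc-highly-articulated} (trees being fully articulated), the upper bound from the homotopy-dimension estimate $\min\{\lfloor n/2\rfloor,|V_{\ge3}|\}$ combined with \autoref{lem:upperbound}, and the leftover cases ($|V_{\ge3}|=1$, or $n\in\{2,3\}$ with $|V_{\ge3}|\ge2$) settled by showing the configuration space is homotopy equivalent to a graph with first Betti number at least $2$ and applying \autoref{lem:tc-graph}. The only difference is that the dimension estimate you single out as the main obstacle, and propose to establish via Farley--Sabalka-style discrete Morse theory, is exactly \autoref{prop:conf-tree-dim}, which the paper simply cites from Chettih's thesis (where critical cells of dimension $k$ require $2k$ particles and $k$ essential vertices --- precisely your ``two particles per activated vertex'' mechanism), so the bookkeeping you anticipate reduces to a citation.
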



The proof of Theorem \ref{thm:tc-highly-articulated} generalizes Farber's
argument from \cite{Far05} using configuration spaces with sinks.
Before we can begin the proof of the three main theorems above we
need some auxiliary propositions.

\begin{prop}[{\cite[Theorem 2.2.4, p. 17]{Chettih16}}]\label{prop:conf-tree-dim}
  Let $G$ be a tree. Then $\Conf_n(G)$ has homotopy dimension
  $\min\{\lfloor n/2\rfloor,|V_{\ge3}|\}$.
\end{prop}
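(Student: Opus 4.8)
The plan is to establish two matching bounds, writing $d \defeq \min\{\lfloor n/2\rfloor, |V_{\ge3}|\}$ throughout. For the upper bound I would produce a CW complex of dimension at most $d$ that is homotopy equivalent to $\Conf_n(G)$; for the lower bound I would exhibit a nonzero class in $H_d(\Conf_n(G))$, which forbids any homotopy-equivalent CW complex of dimension below $d$. Together these pin the homotopy dimension down to exactly $d$.

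For the upper bound I would start from the finite cube complex of \autoref{prop:combinatorial-model} (taking $W=\emptyset$), whose dimension is a priori only the larger value $\min\{n,|V_{\ge3}|\}$, and then run a discrete Morse function in the spirit of Farley and Sabalka. Fixing an embedding of the tree and a root gives a total order on the vertices and hence an orientation of every edge towards the root; this determines a gradient matching whose critical cells are precisely those in which every occupied vertex is blocked and every occupied edge is order-disrespecting. The decisive point, special to trees, is that a disrespecting edge must emanate from an essential vertex, and that criticality forces a \emph{second}, blocking particle to sit on a lower branch at that same vertex. Thus a critical cell of dimension $k$ occupies $k$ pairwise distinct essential vertices, each carrying one moving and one blocking particle, so that $2k\le n$ and $k\le|V_{\ge3}|$; that is, $k\le d$. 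The associated Morse complex is homotopy equivalent to $\Conf_n(G)$ and has dimension at most $d$.

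For the lower bound I would choose $d$ essential vertices $v_1,\dots,v_d$ lying in pairwise disjoint star neighborhoods (possible since $d\le|V_{\ge3}|$), place two particles in each such neighborhood, and park the remaining $n-2d\ge 0$ particles on leaves far from the $v_i$. Letting the two particles near $v_i$ circle each other along a local loop as in the homotopy equivalence $\Conf_2(Y)\simeq S^1$ of \autoref{fig:conf_2_Y} yields a continuous map $(S^1)^d \to \Conf_n(G)$. To see that its fundamental class survives, I would use the one-dimensional cohomology classes $w_{v_i}$ recording the local winding at each essential vertex (the same classes that drive the cup-length estimates elsewhere in this paper): because the supports are disjoint, $w_{v_i}$ restricts to a generator on the $i$-th circle and to zero on the others, so by the K\"unneth formula $\langle w_{v_1}\cup\cdots\cup w_{v_d}, [T^d]\rangle = \pm 1$. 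Hence $[T^d]\neq 0$ in $H_d(\Conf_n(G))$, and the homotopy dimension is at least $d$.

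The main obstacle is the critical-cell analysis underlying the upper bound: one must check carefully that in a tree every disrespecting edge of a critical cell genuinely consumes a separate blocking particle, which is exactly what upgrades the naive bound $\min\{n,|V_{\ge3}|\}$ to $\min\{\lfloor n/2\rfloor,|V_{\ge3}|\}$. A secondary subtlety is the bookkeeping of particle labels: the Farley--Sabalka matching was originally built for the unordered model, so one must verify (as in Chettih's thesis) that the same matching and the same critical cells survive once the labels are remembered, since passing to the ordered setting does not change which cells are critical.
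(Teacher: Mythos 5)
Your proposal is correct. On the upper bound it is the paper's own route: the paper cites Chettih's thesis and records exactly your counting --- a critical cell of dimension $k$ of the discrete Morse flow needs at least $2k$ particles and $k$ essential vertices --- and, like you, it defers to \cite{Chettih16} the two delicate points you flag (that each disrespecting edge consumes its own blocking particle, and that the matching and its critical cells persist when labels are remembered). Where you genuinely diverge is the lower bound: the paper never argues it (the equality is bundled into the citation, and the paper only ever uses the upper bound, via \autoref{lem:upperbound}), whereas you make it explicit with a torus class. That argument is sound, but its one nontrivial ingredient is the existence of the classes $w_{v_i}\in H^1(\Conf_n(G);\Z/2)$ pulling back to a generator on the $i$-th circle factor; this is not automatic, and in this paper it is precisely what \autoref{prop:homology-injection} supplies via the collapse maps $\Phi_{i,i}\colon\Conf_n(G)\to\Conf_2(\overline{G}_i,W_i)$ onto configuration spaces with sinks (applicable to trees, since every essential vertex is an articulation whose Y-graph meets at least two complementary components). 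Taking $w_{v_i}=\Phi_{i,i}^*(x_i)$, the commuting square in the proof of Theorem \ref{thm:tc-highly-articulated} gives $\psi_{\id}^*w_{v_i}=\pi_i^*(\text{generator})$, and your K\"unneth evaluation goes through; indeed your lower bound is that proof's torus computation specialized to $\sigma=\id$, and it is simpler there because no cross terms need to be analysed. Net comparison: the paper's treatment is a citation plus a one-sentence explanation of the upper bound, while yours is self-contained at the cost of importing the sink machinery developed for the TC bound. One cosmetic slip: you set up on the Świątkowski-type complex of \autoref{prop:combinatorial-model} but then describe Farley--Sabalka's machinery, which was built on Abrams' discretized cube complex; either model can carry the argument, but you should run it on one of them consistently.
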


Notice that the above dimension estimate is not stated in this way in
\cite{Chettih16}, but instead a description of the critical cells of a
discrete Morse flow on $\Conf_n(G)$ for $G$ a tree is given.  A
critical cell of dimension $k$ in this description needs at least $2k$
particles and $k$ essential vertices, proving the statement.


\begin{prop}[{\cite{Ghrist01},
\cite[Proposition 3.5, p.  36]{Luetgehetmann14}}]\label{prop:rank-configuration}
Let $Y^l_k$ be a graph which is constructed by attaching $k$ leaves
and $l$ loops to a single vertex, such that $k+2l\ge3$. Then
$\Conf_n(Y^l_k)$ is homotopy equivalent to a graph with first Betti numbers
\[
1 + \frac{(n + k +l -2)!}{(k+l-1)!} (n(k+2l-2) - (k+l)+1).
\]
\end{prop}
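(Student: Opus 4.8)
The plan is to reduce the computation of the first Betti number to a signed cell count (Euler characteristic) performed on the combinatorial model of \autoref{prop:combinatorial-model}. Since $Y^l_k$ has a single vertex of valence $\ge 2$, namely its centre $c$, and carries no sinks, that proposition produces a deformation retract onto a cube complex of dimension $\min\{n,1\}=1$; hence $\Conf_n(Y^l_k)$ is homotopy equivalent to a graph. As $Y^l_k$ is connected and has an essential vertex (because $k+2l\ge 3$), the ordered configuration space is connected (this is the standard connectivity criterion for configuration spaces of graphs with an essential vertex; see \cite{Ghrist01}). Therefore $b_1 = 1 - \chi\bigl(\Conf_n(Y^l_k)\bigr)$, and it remains to compute the Euler characteristic of the one-dimensional model as $(\#0\text{-cubes}) - (\#1\text{-cubes})$.

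First I would count the $0$-cubes. Each is specified by recording whether $c$ is occupied (by one of the labelled particles or by none) and, for each of the $k+l$ edges, the linearly ordered list of particles sitting in its interior — their equal-spacing positions being then forced. Organising the count by the exponential generating function in the number of particles, the centre contributes a factor $1+t$ and each edge, leaf or loop alike, contributes a list factor $(1-t)^{-1}$, so the number of $0$-cubes on $n$ particles is $n!\,[t^n]\,(1+t)(1-t)^{-(k+l)}$. Next I would count the $1$-cubes: each is a $0$-cube with $c$ occupied together with a choice of one of the edge-ends at $c$ along which to push the central particle. There are $k+2l$ such edge-ends (the valence of $c$), and the $0$-cubes with $c$ occupied have generating function $t(1-t)^{-(k+l)}$, so the number of $1$-cubes equals $(k+2l)\cdot n!\,[t^n]\,t(1-t)^{-(k+l)}$.

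Subtracting and factoring out $(1-t)^{-(k+l)}$ gives
\[
\chi\bigl(\Conf_n(Y^l_k)\bigr)=n!\,[t^n]\,(1-t)^{-(k+l)}\bigl(1+(1-(k+2l))t\bigr).
\]
Using $[t^n](1-t)^{-(k+l)}=\binom{n+k+l-1}{k+l-1}$ and $[t^{n-1}](1-t)^{-(k+l)}=\binom{n+k+l-2}{k+l-1}$, I would expand, pull out the common factor $\tfrac{(n+k+l-2)!}{(k+l-1)!}$, and simplify the remaining bracket to $-\bigl(n(k+2l-2)-(k+l)+1\bigr)$. Substituting into $b_1 = 1 - \chi$ then yields exactly the stated expression.

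The main obstacle is the correct bookkeeping of the loops, and it is precisely this point that produces the coefficient in the formula. In a $0$-cube a loop behaves like any interval and contributes a single list factor, whereas for the $1$-cubes a loop is incident to $c$ at both of its ends and therefore offers two independent directions along which the central particle may leave the vertex; this is what replaces the naive edge count $k+l$ by the valence $k+2l$ in the count of $1$-cubes. One must also verify that no constraint reduces the $0$-cube count (arbitrarily many particles may sit equidistantly on one edge) and that no cube of dimension $\ge 2$ occurs, which would require particles on two distinct essential vertices, of which there is only one. As a consistency check, for $Y^0_3=Y$ with $n=2$ the count gives $18$ $0$-cubes and $18$ $1$-cubes, hence $b_1=1$, matching $\HH_1(\Conf_2(Y))\cong\Z$.
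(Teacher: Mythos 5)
Your argument is correct, and it is worth noting that the paper itself gives \emph{no} proof of this proposition: it is imported by citation (to Ghrist and to Proposition 3.5 of L\"utgehetmann's thesis). So your Euler-characteristic computation is a genuinely independent, self-contained re-derivation using only tools stated in the paper, namely \autoref{prop:combinatorial-model} (with $W=\emptyset$ and $V_{\ge2}=\{c\}$, giving a model of dimension $\min\{n,1\}=1$) together with connectivity of $\Conf_n(Y^l_k)$. Both delicate points are handled correctly. First, the loop bookkeeping: a $1$-cube is a $0$-cube with $c$ occupied together with a choice of \emph{edge-end} at $c$ rather than a choice of edge, so each loop contributes two directions; this produces the factor $k+2l$ and is forced if the model is to be correct --- already for $n=1$ the formula must return $b_1=l$, and an empty loop must contribute two distinct $1$-cubes with equal boundary (the bigon realizing the loop class), which the paper's cube-complex definition explicitly permits (see the remark allowing two cubes to be identified along more than one face). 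Second, the connectivity needed to pass from $\chi$ to $b_1=1-\chi$ is the standard fact that ordered configuration spaces of a connected graph with an essential vertex are connected; this genuinely requires the citation you give, since $\chi$ alone does not determine $b_1$. The counts and the algebra check out:
\[
\chi\bigl(\Conf_n(Y^l_k)\bigr)=n!\,[t^n]\,(1-t)^{-(k+l)}\bigl(1+(1-k-2l)t\bigr)
=-\frac{(n+k+l-2)!}{(k+l-1)!}\bigl(n(k+2l-2)-(k+l)+1\bigr),
\]
which gives exactly the stated Betti number, and your sanity check on $Y^0_3$ with $n=2$ ($18$ vertices, $18$ edges, $b_1=1$) agrees with the paper's own description of the model in \autoref{fig:conf_2_Y}. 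What the citation buys the paper is brevity; what your proof buys is a verification of the cited formula inside the very framework (the combinatorial model with its half-edge moves) that the paper relies on elsewhere.
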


\begin{prop}\label{prop:homology-injection}
  Let $G$ be a connected graph with at least one articulation $v$ and
  let $G_v$ be a Y-graph embedded around that vertex such that
  $G_v-\{v\}$ meets at least two connected components of
  $G-\{v\}$. Then the induced map
  \[
  H_1(\Conf_2(G_v);\Z)\to H_1(\Conf_2(G);\Z)
  \]
  is injective.  More precisely, there is a quotient graph with sinks
  $(G,\emptyset)\twoheadrightarrow (\overline{G}_v, W_v)$ such that
  the induced composition
  \[
  H_1(\Conf_2(G_v);\Z) \to H_1(\Conf_2(G);\Z) \to
  H_1(\Conf_2(\overline{G}_v,W_v);\Z)
  \]
  is injective.  Furthermore, the image has rank 1 because
  $\Conf_2(G_v)$ is homotopy equivalent to the circle.
\end{prop}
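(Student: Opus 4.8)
The plan is to exhibit an explicit quotient graph with sinks through which the composition factors, and to verify injectivity by tracking the single generating 1-cycle of $\Conf_2(G_v)\simeq S^1$ through the maps. Since $G_v$ is a Y-graph, by \autoref{prop:rank-configuration} (with $k=3$, $l=0$, $n=2$) we have $H_1(\Conf_2(G_v);\Z)\cong\Z$, generated by the explicit cycle described in the $\Conf_2(Y)$ example above: the two particles alternately move onto the edge not occupied by the other, tracing out the embedded circle in the combinatorial model. So the task reduces to showing that this generator maps to a nonzero (hence infinite-order) class under the composition.

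First I would construct the target. Because $v$ is an articulation, $G-\{v\}$ has at least two connected components; pick two of them, $C_1$ and $C_2$, each meeting $G_v-\{v\}$. I would collapse everything outside a neighborhood of these two arms down to sinks: more precisely, collapse each connected component of $G-\{v\}$ other than (representatives inside) $C_1,C_2$, together with the far parts of $C_1$ and $C_2$, obtaining a quotient $(\overline{G}_v,W_v)$ in which $G_v$ sits as three edges emanating from $v$, two of them terminating in sink vertices (the collapsed components) and the behavior controlled. This gives the collapse map $G\to \overline{G}_v$ turning the collapsed subgraph into the sink set $W_v$, inducing $\Conf_2(G)\to\Conf_2(\overline{G}_v,W_v)$ as in \autoref{sec:tc}, and hence the second arrow in the composition. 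The first arrow is induced by the inclusion $G_v\hookrightarrow G$.

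The key computation is then to identify the image of the generator in $H_1(\Conf_2(\overline{G}_v,W_v);\Z)$. I would use the combinatorial model of \autoref{prop:combinatorial-model} for the configuration space with sinks. The quotient $\overline{G}_v$ should be arranged so that its relevant configuration space retracts onto a model closely resembling $\Conf_2([0,1],\{0,1\})$ from \autoref{ex:sinks-interval}, which is a circle: with two sinks available the two particles can execute precisely the nontrivial loop (particle 1 to a sink, particle 2 follows, particle 1 returns, particle 2 returns). The generating cycle of $\Conf_2(G_v)$, read through the collapse, maps to exactly this nontrivial loop, so its image is the generator of a $\Z$-summand and is in particular nonzero. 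Tracking the cycle cell-by-cell through the two induced maps, and confirming that no cancellation occurs in the target (this is where the sink structure is essential — without sinks the arms could be too short to support the loop), establishes that the composition sends a generator of $\Z$ to an element of infinite order, hence is injective.

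\textbf{The main obstacle} is the middle step: correctly choosing the quotient $(\overline{G}_v,W_v)$ so that, on the one hand, it is genuinely a quotient of $(G,\emptyset)$ (so that the collapse map exists and factors through $H_1(\Conf_2(G))$), and on the other hand its configuration space with sinks has small enough $H_1$ that the image class is visibly nonzero and of infinite order. If too little is collapsed, the target cohomology is complicated and detecting the class is hard; if too much is collapsed, the two arms may fail to retain enough room for the two particles to swap, killing the loop. Balancing these — essentially reducing to the interval-with-two-sinks model of \autoref{ex:sinks-interval} while respecting that $v$ is a genuine articulation separating $C_1$ from $C_2$ — is the crux, and once the image is pinned down as a generator of an infinite cyclic summand, injectivity and the rank-$1$ claim follow immediately from $\Conf_2(G_v)\simeq S^1$.
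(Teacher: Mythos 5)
Your overall strategy coincides with the paper's: collapse $G$ around the articulation $v$ to a graph with sinks, use the $1$-dimensional combinatorial model of \autoref{prop:combinatorial-model}, and track the generator of $H_1(\Conf_2(G_v);\Z)\cong\Z$ through the collapse. However, the central verification contains a genuine gap, precisely at the point you yourself flag as the crux. First, the target cannot be arranged to be (a retract of) the circle of \autoref{ex:sinks-interval}. A quotient in this framework only contracts subgraphs; it cannot merge the $\ge 3$ edges incident to $v$, and it must not absorb $v$ into a sink (otherwise the loop dies). So any admissible quotient $\overline{G}_v$ is a star with non-sink center $v$ and sink leaves, and $\Conf_2(\overline{G}_v,W_v)$ is a graph whose first homology has rank strictly larger than $1$ (for three edges to three distinct sinks its combinatorial model has $15$ vertices and $18$ edges, hence first Betti number $4$). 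Consequently the argument ``the image is the generator of the circle, hence a $\Z$-summand'' is not available; what one can and must use instead is only that the model is $1$-dimensional, so a $1$-cycle which is a non-zero linear combination of edges automatically represents a non-zero homology class.

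Second, and more seriously, your identification of the image class is unproven and in fact incorrect. The generator of $H_1(\Conf_2(G_v))$ is the $12$-edge shuffle cycle, and its image in the model of $\Conf_2(\overline{G}_v,W_v)$ is that same $12$-term chain with the three arms replaced by sinks --- not the $4$-edge exchange loop of \autoref{ex:sinks-interval} (in a $1$-complex, distinct non-zero cycles are never homologous, so these classes cannot be ``exactly'' equal). Whether the $12$ image edges stay distinct depends on how the arms of $G_v$ distribute among the sinks, and the hypothesis only guarantees that $G_v-\{v\}$ meets \emph{two} components of $G-\{v\}$: two of the three arms may map to edges pointing at the \emph{same} sink vertex. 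In that case cancellation genuinely does occur in the image chain, contrary to your hoped-for ``no cancellation'' conclusion, and one must check --- as the paper does --- that what survives after cancellation, namely a sum of four circle classes (each particle traversing an embedded loop in each direction while the other particle sits at each of the two sinks), is still a non-trivial combination of pairwise distinct edges. Without this case analysis the proof is incomplete; with it, your final step is fine, since $H_1$ of a graph is free abelian, so a non-zero image class has infinite order and injectivity follows.
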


\begin{proof}
  Denote by $E_v$ the set of all edges not incident to $v$, then we
  define $\overline{G}_v$ to be the graph $G$ with all those edges
  collapsed to points.  Additionally, we add an artificial 2-valent
  sink vertex in the middle of each edge forming a self-loop at $v$.
  Let $W_v$ be the set of all vertices of $\overline{G}_v$ except for
  $v$.  By the total separation assumption the set $W_v$ contains at
  least two elements, and by the choice of embedding of the graph
  there are at least two edges of the image of $G_v$ in
  $\overline{G}_v$ pointing towards distinct sink vertices.

  The graph $\overline{G}_v$ has exactly one non-sink vertex and does
  not have any edge incident to two sink vertices, so by
  \autoref{prop:combinatorial-model} its combinatorial model is a
  graph.  We will now see that a representative of the generator of
  $H_1(\Conf_2(G_v))$ is mapped to a non-trivial cycle in
  $\Conf_2(\overline{G}_v, W_v)$, therefore representing a non-trivial
  homology class.  The representative of the single class in
  $H_1(\Conf_2(G_v))$ is shown in
  \autoref{fig:generator-two-particles-y-graph}.  We now map this
  cycle into the combinatorial model of $\Conf_2(\overline{G}_v, W_v)$
  and then by the total separability assumption there are two
  possibilities: either the three edges point to three distinct sink
  vertices of the quotient or they point to only two different sink
  vertices.  In the first case it is easy to check that the 12
  distinct edges of the cycle map injectively into the combinatorial
  model of the quotient, so it remains to show that the image is a
  non-trivial linear combination of edges also for the second case.

  The image of the cycle in this case is represented as indicated in
  \autoref{fig:generator-two-particles-quotient}.  Cancelling all
  possible edges in the linear combination we get a sum of four circle
  classes: each of the two particles moves along an embedded circle in
  the two possible directions with the other particle sitting on each
  of the two sink vertices.  All edges in these four summands of the
  cycle are distinct edges of the combinatorial model, showing that
  the linear combination is non-trivial.  The lack of 2-cells shows
  that this cycle represents a non-trivial homology class. \qedhere

  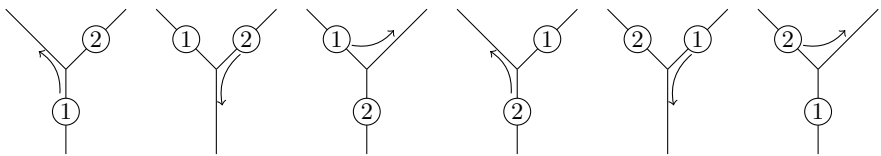
\begin{figure}[ht]
    \begin{center}
      \begin{tikzpicture}[scale=0.8]
        \newcommand{\Graph}{ \draw (0, 0) -- (0, -1.41); \draw (0, 0)
          -- (1, 1); \draw (0, 0) -- (-1, 1); }

        \begin{scope}[shift={(0, 0)}]
          \Graph
          \particleNr{(0, -0.705)}{1}
          \particleNr{(0.5, 0.5)}{2}
          \draw[->, bend right] (-0.1, -0.4) to (-0.45, 0.3);
        \end{scope}

        \begin{scope}[shift={(2.5, 0)}]
          \Graph
          \particleNr{(-0.5, 0.5)}{1}
          \particleNr{(0.5, 0.5)}{2}
          \draw[<-, bend left] (0.1, -0.6) to (0.4, 0.25);
        \end{scope}

        \begin{scope}[shift={(5, 0)}]
          \Graph
          \particleNr{(-0.5, 0.5)}{1}
          \particleNr{(0, -0.705)}{2}
          \draw[->, bend right] (-0.25, 0.4) to (0.45, 0.65);
        \end{scope}

        \begin{scope}[shift={(7.5, 0)}]
          \Graph
          \particleNr{(0.5, 0.5)}{1}
          \particleNr{(0, -0.705)}{2}
          \draw[->, bend right] (-0.1, -0.4) to (-0.45, 0.3);
        \end{scope}

        \begin{scope}[shift={(10, 0)}]
          \Graph
          \particleNr{(0.5, 0.5)}{1}
          \particleNr{(-0.5, 0.5)}{2}
          \draw[<-, bend left] (0.1, -0.6) to (0.4, 0.25);
        \end{scope}

        \begin{scope}[shift={(12.5, 0)}]
          \Graph
          \particleNr{(0, -0.705)}{1}
          \particleNr{(-0.5, 0.5)}{2}
          \draw[->, bend right] (-0.25, 0.4) to (0.45, 0.65);
        \end{scope}
      \end{tikzpicture}
    \end{center}
    \caption{A generator of the first homology of the configuration
      space of two particles in the $Y$ graph. The arrows indicate two
      edges of the combinatorial model, so the whole cycle is a sum of
      12 edges.}
    \label{fig:generator-two-particles-y-graph}
  \end{figure}

  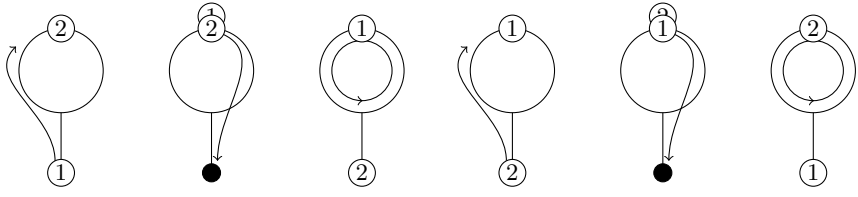
\begin{figure}[ht]
    \begin{center}
      \begin{tikzpicture}[scale=0.8]
        \newcommand{\Graph}{ \draw (0, 0.7) circle (0.7); \draw (0, 0)
          -- (0, -1); \filldraw (0,1.4) circle (.15cm); \filldraw
          (0,-1) circle (.15cm); }

        \begin{scope}[shift={(0, 0)}]
          \Graph
          \particleNr{(0, -1)}{1}
          \particleNr{(0, 1.4)}{2}
          \draw[->, out=90, in=225] (-0.1, -0.8) to (-0.75, 1.1);
        \end{scope}

        \begin{scope}[shift={(2.5, 0)}]
          \Graph
          \particleNr{(0, 1.6)}{1}
          \particleNr{(0, 1.4)}{2}
          \draw[<-, out=90, in=-25] (0.1, -0.8) to (0.2, 1.3);
        \end{scope}

        \begin{scope}[shift={(5, 0)}]
          \Graph \draw (0, 0.7) circle (0.5); \draw[->] (0, 0.2) to
          (0.01, 0.2);
          \particleNr{(0, 1.4)}{1}
          \particleNr{(0, -1)}{2}
        \end{scope}

        \begin{scope}[shift={(7.5, 0)}]
          \Graph \draw[->, out=90, in=225] (-0.1, -0.8) to (-0.75,
          1.1);
          \particleNr{(0, 1.4)}{1}
          \particleNr{(0, -1)}{2}
        \end{scope}

        \begin{scope}[shift={(10, 0)}]
          \Graph
          \particleNr{(0, 1.6)}{2}
          \particleNr{(0, 1.4)}{1}
          \draw[<-, out=90, in=-25] (0.1, -0.8) to (0.2, 1.3);
        \end{scope}

        \begin{scope}[shift={(12.5, 0)}]
          \Graph \draw (0, 0.7) circle (0.5); \draw[->] (0, 0.2) to
          (0.01, 0.2);
          \particleNr{(0, 1.4)}{2}
          \particleNr{(0, -1)}{1}
        \end{scope}
      \end{tikzpicture}
    \end{center}
    \caption{The image of the generator of $H_1(\Conf_2(G_v))$ under
      the collapse map to $H_1(\Conf_2(\overline{G}_v, W_v))$. The
      only cancellation of edges happens amongst the first and fifth
      as well as the second and fourth pictures.}
    \label{fig:generator-two-particles-quotient}
  \end{figure}
\end{proof}

\begin{proof}[Proof of Theorem \ref{thm:tc-highly-articulated}]

  The upper bound follows from
  \autoref{prop:combinatorial-model} (we may assume there are no
  vertices of valance 2) and \autoref{lem:upperbound}.

  It remains to show the lower bounds. Let $d=\min\{\lfloor
  n/2\rfloor,m\}\ge2$. Choose $d$ Y-graphs $G_i$ embedded as
  subgraphs in $G$ as in \autoref{prop:homology-injection}, each
  around a different articulation.  Denote by $(\overline{G}_i,
  W_i)$ the corresponding quotient graphs.  We may assume that the
  $G_i$ are disjoint from each other by sufficiently subdividing $G$.


  Denote by $\Phi_{i,j}\colon\Conf_n(G)\to\Conf_2(\overline{G}_i,
  W_i)$ the map which sends the configuration $(x_1,\ldots,x_n)$ to
  the image of $(x_{2j-1},x_{2j})$ in the quotient graph, for $1 \le j
  \le d$.

  Since the $G_i$ are disjoint by construction, for every permutation
  $\sigma\in\Sigma_d $ there is an embedding $\psi_\sigma$ which makes
  the following diagrams commute for every $1\le i\le d$.
  \[
  \xymatrix{ \prod_{j=1}^d\Conf_2(G_j)\ar[d]^{\pi_i}
    \ar[0,2]^-{\psi_\sigma}
    &&\Conf_n(G)\ar[d]^{\Phi_{i,\sigma(i)}}\\
    \Conf_2(G_{i}) \ar[r] &\Conf_2(G) \ar[r] &\Conf_2(\overline{G}_i,
    W_i)}
  \]
  Notice that there are choices involved, but the rest of this proof is
  independent of those choices.

  By \autoref{prop:homology-injection}, $H_1(\Conf_2(G_i);\Z/2)$
  embeds as a copy of $\Z/2$ inside the module
  $H_1(\Conf_2(\overline{G}_i, W_i);\Z/2)$.  Let $x_i\in
  H^1(\Conf_2(\overline{G}_i, W_i);\Z/2)$ denote the cohomology class
  dual to the generator of this copy of $\Z/2$.  Consider the classes
  $u_{i,j}=\Phi_{i,j}^*(x_i)\in H^1(\Conf_n(G))$.

  Write $\overline u_{j,k}=u_{j,k}\times1-1\times u_{j,k}$.  All
  elements of this form are zero-divisors.  By
  \autoref{lem:lowerbound} the length of each non-zero product of
  zero-divisors is a lower bound for the topological complexity.  In
  the following we will show that the product
  $\prod_{i=1}^d\overline u_{i,i}\cdot\prod_{i=1}^d\overline
  u_{i,i+1}$ of length $2d$ is non-zero (the indices in $u_{i,i+1}$
  are considered modulo $d$).  

  Let $z_\sigma\in H_d(\Conf_n(G))$ be the image of the generator of
  \[H_d\left(\prod_{j=1}^d\Conf_2(G_j);\Z/2\right)\cong \Z/2\] under
  $(\psi_\sigma)_*$.
  Denote by $z$ and $z_{\sh}$ the tori $z_\sigma$ for $\sigma$ the
  identity and the shift $i\mapsto i+1$, respectively.  We will now
  prove that the product of zero-divisors is non-trivial by showing
  that
  \[
  \left\langle\prod_{i=1}^d\overline
    u_{i,i}\cdot\prod_{i=1}^d\overline u_{i,i+1}, z\times z_{\sh}
  \right\rangle \neq 0 \in \Z/2.
  \]
  To compute this product, we have to evaluate products of the form
  $\langle\prod_{i=1}^d u_{j_i,k_i},z\rangle$ for sets of
  \emph{distinct} pairs $(j_i, k_i)$.  By definition, this can be
  computed by evaluating the product of the corresponding
  \[ x_{j_1}\cdots x_{j_d} \in H^d \left(\prod_{i=1}^d
    \Conf_2(\overline{G}_{j_i}, W_{j_i})\right) \] on the image of the
  non-trivial element of $\Z/2$ under the map
  \[
  \Z/2\cong H_d\left( \prod_{i=1}^d \Conf_2(G_i) \right)
  \xrightarrow{\psi_{\id}} H_d(\Conf_n(G)) \xrightarrow{\prod
    \Phi_{j_i,k_i}} H_d\left(\prod_{i=1}^d \Conf_2(\overline{G}_{j_i},
    W_{j_i})\right).
  \]
  This map is the tensor product of maps
  \[
  \eta_{j_i,k_i}\colon \Z/2\cong H_d\left( \prod_{i=1}^d \Conf_2(G_i)
  \right)\to H_1(\Conf_2(\overline{G}_{j_i}, W_{j_i})),
  \]
  so we need to evaluate $x_{j_i}$ on the image of the map above for
  each $i$ and multiply the
  results. 
  The image of the non-trivial class under $\eta_{j_i,k_i}$ has a
  representative that only meets the vertex of $\overline{G}_{j_i}$
  that is the image of the essential vertex of $G_{k_i}\subset G$.
  Every class in $H_1(\Conf_2(\overline{G}_{j_i}, W_{j_i}))$ not
  meeting the non-sink vertex is trivial, so the map $\eta_{j_i,k_i}$
  can only be non-trivial if $k_i=j_i$.  Since
  by definition
  \[
  \langle u_{1,1}\cdots u_{d,d},z\rangle = 1,
  \]
  this means that $\langle\prod_{i=1}^d u_{j_i,k_i},z\rangle$ is
  non-zero if and only if we have (up to permutation) that $j_i = k_i
  = i$ for all $i$.

  Repeating the analogous reasoning for $z_{\sh}$ we see that
  \[
  \left\langle\prod_{i=1}^d\overline
    u_{i,i}\cdot\prod_{i=1}^d\overline u_{i,i+1} \; , \; z\times
    z_{\sh}\right\rangle
  \]
  has exactly one non-trivial summand, namely
  \[
  \left\langle\prod_{i=1}^du_{i,i}\times\prod_{i=1}^du_{i,i+1} \; , \;
    z \times z_{\sh}\right\rangle,
  \]
  showing that the product is non-trivial.

\end{proof}

\begin{proof}[Proof of Theorem \ref{thm:tc-separable-graphs}]

  The claim follows from Theorem \ref{thm:tc-highly-articulated} in all cases
  except when $|V_{\ge3}|=1$. In this case $G$ is
  homeomorphic to a wedge of $k$ intervals and $l$ circles, such that
  $k+2l\ge3$. By \autoref{prop:rank-configuration}, $\Conf_n(G)$ is
  homotopy equivalent to the circle if $G$ is homeomorphic to the
  letter Y and $n=2$, and homotopy equivalent to a graph with first Betti number at
  least 2 otherwise. By \autoref{lem:tc-graph} this implies that
  $\TC(\Conf_n(G))=1$ if $G$ is homeomorphic to the letter Y and
  $n=2$, and $\TC(\Conf_n(G))=2$ otherwise. \qedhere

\end{proof}

\begin{proof}[Proof of Theorem \ref{thm:tc-trees}]

  By \autoref{prop:conf-tree-dim} the homotopy dimension of $\Conf_n(T)$
  is $\min\{\lfloor n/2\rfloor,|V_{\ge3}|\}$. Together with
  \autoref{lem:upperbound} this yields the upper bound.

  The lower bound for $\min\{\lfloor n/2\rfloor,|V_{\ge3}|\}\ge2$ follows
  immediately from Theorem \ref{thm:tc-highly-articulated} because trees are
  fully articulated.

  Finally it remains to show the claim for
  $\min\{\lfloor n/2\rfloor,|V_{\ge3}|\}=1$. The case $|V_{\ge3}|=1$ is covered by
  Theorem \ref{thm:tc-separable-graphs}. The other
  possibility is that $n\in\{2,3\}$ and
  $|V_{\ge3}|\ge2$.  By \autoref{prop:conf-tree-dim}, the
  configuration space $\Conf_n(T)$ is in this case homotopy equivalent
  to a graph.  By \cite[Theorem A, p. 2]{CheLue16}, we can choose a
  basis of $H_1(\Conf_n(T);\Z/2)$ consisting of star classes and
  $\HH$-classes.  Since there are at least two vertices there is at
  least one star class and one $\HH$-class, which then must be
  linearly independent.  Therefore, the graph has first Betti number at least 2 and
  we get $\TC(\Conf_n(T))=2$.

\end{proof}

\section{Banana graphs}
\begin{definition}
  The banana graph $B_k$ on $k\ge 1$ edges is the graph consisting of
  two vertices connected by $k$ edges.
\end{definition}

\setcounter{thmA}{1}
\begin{thmA}\label{thm:tc-banana-graphs}
  The topological complexity of $\Conf_n(B_k)$ is given by
  \[
  \TC(\Conf_n(B_k)) =
  \begin{cases}
    4 &\text{if $k \ge 4$ and $n\ge 3$,}\\
    2 &\text{if $k\ge 3$ and $n\le 2$ or $k=n=3$.}
  \end{cases}
  \]
\end{thmA}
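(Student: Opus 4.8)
The plan is to split along the two values in the statement. The upper bounds are dimensional: both vertices of $B_k$ are essential, so \autoref{prop:combinatorial-model} with $W=\emptyset$ shows that $\Conf_n(B_k)$ is homotopy equivalent to a cube complex of dimension $\min\{n,2\}$, whence $\TC(\Conf_n(B_k))\le\dim(X\times X)=4$ for $n\ge2$ by \autoref{lem:upperbound}; this already settles the upper bound in the value-$4$ regime. For the value-$2$ cases I would instead show that the space is homotopy equivalent to a graph of first Betti number at least $2$ and invoke \autoref{lem:tc-graph}, which supplies both bounds at once. For $n=1$ this is clear, since $\Conf_1(B_k)=B_k$ has $b_1=k-1\ge2$. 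For $n=2$ and $k\ge3$ the top homology of $\Conf_2(B_k)$ is spanned by the product tori of pairs of vertex-disjoint cycles of $B_k$ (a standard feature of two-point configuration spaces of graphs), of which there are none because every cycle of $B_k$ meets both vertices; hence $H_2=0$ and the aspherical $2$-complex $\Conf_2(B_k)$ is homotopy equivalent to a graph. The remaining case $k=n=3$ is the one boundary computation I expect to cost genuine effort, since the disjoint-cycle heuristic fails here (it already fails for $\Conf_3(B_4)\simeq\Sigma_{13}$); there I would compute $H_2(\Conf_3(B_3))$ directly in the combinatorial model via an explicit discrete vector field, to see that it vanishes and that $\Conf_3(B_3)$ is again homotopy equivalent to a graph with $b_1\ge2$.

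The heart of the theorem is the lower bound $\TC(\Conf_n(B_k))\ge4$ for $k\ge4$ and $n\ge3$. The base case is $\Conf_3(B_4)\simeq\Sigma_{13}$ from \cite{CheLue16}: by \autoref{lem:tc-surface} and the equality of topological complexity and zero-divisor cup-length for surfaces there are four classes $a_1,b_1,a_2,b_2\in H^1(\Sigma_{13};\Q)$ with $\bar a_1\bar b_1\bar a_2\bar b_2\neq0$, where $\bar c=c\times1-1\times c$. One subtlety fixes the coefficients here: this length-$4$ product equals $\pm2\,(\omega\times\omega)$, with $\omega$ the fundamental class, so it survives over $\Q$ but \emph{vanishes mod $2$}; unlike in \autoref{prop:homology-injection} and \autoref{thm:tc-highly-articulated}, I must therefore work over $\Q$ rather than $\Z/2$.

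The plan for the general case is to transport these four classes along the inclusion $i\colon\Conf_3(B_4)\hookrightarrow\Conf_n(B_k)$ induced by $B_4\subset B_k$, together with a placement of the remaining $n-3$ particles. If the map $i_*\colon H_1(\Conf_3(B_4);\Q)\to H_1(\Conf_n(B_k);\Q)$ is injective, then $i^*$ is onto in degree $1$, so each of $a_1,b_1,a_2,b_2$ lifts to a class in $H^1(\Conf_n(B_k);\Q)$; since $(i\times i)^*$ is a ring homomorphism sending the bar of a lift to the bar of the original class, the product of the four lifted zero-divisors pulls back to $\bar a_1\bar b_1\bar a_2\bar b_2\neq0$ and is therefore itself nonzero. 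Hence $\zcl_\Q(\Conf_n(B_k))\ge4$, and combined with the upper bound the value is exactly $4$.

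Everything thus hinges on the injectivity of $i_*$ on $H_1(-;\Q)$, and this is the step I expect to be the main obstacle — the more so because the extra $n-3$ particles cannot be parked naively on $B_4$, where the three dancing particles already sweep the whole graph. I would handle both difficulties with the sink technology, in the spirit of \autoref{prop:homology-injection}: collapse the edges of $B_k$ not needed to display the two handles of $\Sigma_{13}$, turn the collapsed regions (which also absorb the parked particles) into sinks, and obtain a quotient $(\overline B,W)$ whose combinatorial model is small and explicit by \autoref{prop:combinatorial-model}. On that model I would exhibit representatives of $a_1,b_1,a_2,b_2$ and check that they remain linearly independent after cancellation, so that the composite $H_1(\Conf_3(B_4);\Q)\to H_1(\Conf_n(B_k);\Q)\to H_1(\Conf_n(\overline B,W);\Q)$ is injective. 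Controlling $H_1(\Conf_n(B_k);\Q)$ uniformly in $k$ and $n$ — in particular verifying that enlarging $k$ and parking particles in sinks do not kill the surface classes — is where a Mayer–Vietoris spectral sequence decomposing the configuration space by edge occupancy, as used by the first author in \cite{Luetgehetmann17}, carries the real weight; this bookkeeping, rather than any single conceptual point, is the hard part of the proof.
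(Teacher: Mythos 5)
Your architecture coincides with the paper's: dimensional upper bounds via \autoref{prop:combinatorial-model} and \autoref{lem:upperbound}; ``graph with $b_1\ge2$'' plus \autoref{lem:tc-graph} for the value-$2$ cases; the surface $\Conf_3(B_4)\simeq\Sigma_{13}$ of \autoref{prop:conf-3-b4} as base case, with Farber's four classes taken over $\Rat$ (your observation that the product dies mod $2$ matches the paper's choice of coefficients); and an $H_1$-injectivity statement proved by sinks and a Mayer--Vietoris spectral sequence, which is the paper's \autoref{prop:conf-b4-injects-bk}. The one structural difference is that the paper first reduces $n\ge3$ to $n=3$ using the homotopy section of the forgetful map (\autoref{prop:adding-particles-banana}) together with \autoref{lem:retract}, instead of transporting classes all the way into $\Conf_n(B_k)$. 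However, two of your steps fail as stated. For $n=2$: the principle that the top homology of a two-particle graph configuration space is spanned by product tori of vertex-disjoint cycles is false --- $\Conf_2(K_5)$ is homotopy equivalent to a closed orientable surface (Abrams), although $K_5$ contains no two vertex-disjoint cycles --- and, independently, ``aspherical $2$-complex with $H_2=0$, hence homotopy equivalent to a graph'' is not a valid inference: the Klein bottle is an aspherical square complex with vanishing $H_2$ whose fundamental group is not free. The paper avoids both issues by an explicit collapse of the combinatorial model (\autoref{prop:conf-2-banana}). Your discrete-vector-field plan for $\Conf_3(B_3)$ is sound for exactly this reason (no critical $2$-cells yields a $1$-complex directly, with no detour through $H_2$), and that is the kind of argument needed at $n=2$ as well; note also that $b_1\ge2$ of the resulting graphs is not automatic but comes from \autoref{prop:adding-particles-banana}.

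The more serious gap is your verification criterion for the crucial injectivity. What the lifting argument needs is that each $u_j\in H^1(\Conf_3(B_4);\Rat)$ lie in the image of $i^*$, equivalently (over a field) that $u_j$ vanish on $\ker(i_*)$; this is a condition on the \emph{whole kernel} of $i_*$ on the rank-$26$ space $H_1(\Sigma_{13};\Rat)$. Checking that representatives of the four classes $a_1,b_1,a_2,b_2$ ``remain linearly independent after cancellation'' in $H_1(\Conf_n(\overline B,W);\Rat)$ controls only a $4$-dimensional piece of the image and says nothing about the kernel: a class $x$ with $u_j(x)\neq0$ can die under the quotient map even while your four designated cycles survive independently, and then $u_j$ does not lift. (A toy example: $H_1(X)=\langle e_1,e_2\rangle$ mapping to $H_1(Y)=\langle f\rangle$ by $e_i\mapsto f$; the image of $e_1$ is nonzero, yet $e_1^*$ does not extend because it pairs nontrivially with $e_1-e_2\in\ker$.) This is precisely why the paper's \autoref{prop:conf-b4-injects-bk} proves honest injectivity of $H_1(\Conf_3(B_4))\to H_1(\Conf_3(B_k))$, and why its proof --- comparing the Mayer--Vietoris spectral sequences of $\Conf_3(B_k)$, $\Conf_3(B_k,V(B_k))$ and $\Conf_3(B_k,\{v_2\})$, and splitting $E^\infty_{1,0}$ and $E^2_{0,1}$ into direct summands compatibly for all $k\ge4$ --- occupies most of the section. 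Your closing remark that the spectral sequence ``carries the real weight'' is accurate, but the weight it must carry is full injectivity (or at least control of $\ker i_*$), not the four-class independence check you propose.
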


\begin{rem}
  The case $k\le 2$ is straightforward: By
  \autoref{prop:combinatorial-model} the combinatorial model in these
  cases is 1-dimensional, so it reduces to computing the topological
  complexity of graphs using \autoref{lem:tc-graph}.  For $k\le 2$ and
  $n> k$ the configuration space $\Conf_n(B_k)$ is disconnected, which
  means that it has infinite topological complexity.  The remaining
  cases are
  \begin{align*}
    \TC(\Conf_1(B_1)) &= \TC(B_1) = 0,\\
    \TC(\Conf_1(B_2)) &= \TC(B_2) = \TC(S^1) = 1,\\
    \TC(\Conf_2(B_2)) &= \TC(B_2) = \TC(S^1) = 1.
  \end{align*}
  The last equality holds because the projection $\Conf_2(B_2)\to
  \Conf_1(B_2) = B_2$ has a homotopy inverse given by putting the
  second particle antipodal to the first particle.
\end{rem}

This means that Theorem \ref{thm:tc-banana-graphs} determines the
topological complexity for all pairs $(n,k)$ except for $(n,3)$ with
$n\ge 4$.

The proof has four main ingredients, whose proofs will be the content
of the rest of this section:

\begin{prop}[{\cite[Proposition 4.3, p. 19]{CheLue16}}]\label{prop:conf-3-b4}
  $\Conf_3(B_4)$ is homotopy equivalent to a closed surface of genus
  13.
\end{prop}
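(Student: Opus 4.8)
The plan is to use the combinatorial model of \autoref{prop:combinatorial-model} and show that it is, on the nose, a closed orientable surface of Euler characteristic $-24$. Since $B_4$ has two essential vertices of valence $4$ and no sink edges, the relevant dimension is $\min\{3,|V_{\ge 2}|+|E_W|\}=\min\{3,2\}=2$, so the model $M$ is a $2$-dimensional square complex onto which $\Conf_3(B_4)$ deformation retracts; as $\Conf_3(B_4)\simeq M$ it suffices to analyze $M$. First I would enumerate the cells explicitly: the $0$-cells are the admissible configurations of the three labelled particles at the two vertices and along the four edges (particles sharing an edge kept equidistant), the $1$-cells correspond to moving a single vertex-particle onto an incident edge, and the $2$-cells to two such moves performed independently (no two particles moving towards the same vertex).

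The main step — and the main obstacle — is to verify that $M$ is a closed surface, i.e.\ a $2$-manifold without boundary. For a square complex this reduces to two local conditions: every $1$-cell is a face of exactly two $2$-cells, and the link of every $0$-cell is a single circle. Both require a finite but delicate case analysis over the combinatorial types of configurations of three particles on $B_4$. The key point is that, with four edges available at each vertex, whenever a particle occupies a vertex there are always exactly enough independent moves to close the vertex link into one cycle and to flank each edge of $M$ by precisely two squares. I would organize this by the \emph{profile} recording how many particles sit on each vertex and on each edge, and verify the local picture profile by profile.

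Once $M$ is known to be a closed surface, it is determined by its Euler characteristic together with its orientability. Counting the $0$-, $1$-, and $2$-cells enumerated above yields $\chi(M)=-24$ (in agreement with the Gal--Świątkowski Euler-characteristic formula for graph configuration spaces, which provides a useful independent check). To decide orientability I would exhibit a coherent orientation of the squares of $M$ — equivalently, compute the cellular chain complex and check that $H_2(M;\Z)\cong\Z$ rather than $0$ — thereby ruling out the non-orientable surface $N_{26}$ of the same Euler characteristic.

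Combining $\chi(M)=-24=2-2g$ with orientability gives $g=13$, so $M$ is homeomorphic to the closed orientable surface $\Sigma_{13}$ and hence $\Conf_3(B_4)\simeq\Sigma_{13}$. I expect the cell enumeration and the $\chi$ computation to be essentially mechanical; the genuinely substantive work is the link-condition verification establishing that $M$ is a manifold, with the orientability determination a close second.
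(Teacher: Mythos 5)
This paper contains no proof of this proposition at all --- it is imported verbatim from \cite{CheLue16} --- so the only comparison to be made is with the proof given there; but independently of that, your plan breaks at the step you yourself single out as the main one. The combinatorial model $M$ of $\Conf_3(B_4)$ from \autoref{prop:combinatorial-model} is \emph{not} a closed surface, so the two local conditions you propose to verify are not ``delicate'' but false. Concretely, write $v,w$ for the vertices and $e_1,\dots,e_4$ for the edges of $B_4$, and consider the $1$-cube $c$ in which particle $1$ moves from $v$ onto $e_1$ while particle $2$ sits at $w$ and particle $3$ sits on $e_1$. The $2$-cubes containing $c$ are exactly those in which particle $2$ additionally moves from $w$ onto one of $e_1,e_2,e_3,e_4$ (the model explicitly permits two particles to move onto the same edge from opposite ends), so $c$ lies in exactly \emph{four} $2$-cubes. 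In the other direction, the $1$-cube in which particle $1$ moves from $v$ onto $e_2$ while particles $2$ and $3$ both sit on $e_1$ is a \emph{free} face: its unique incident $2$-cube is the one in which whichever of the two parked particles occupies the $w$-nearest position on $e_1$ moves from $w$ onto $e_1$ (no $2$-cube can contain it as a ``time-zero'' face, since no particle sits at $w$). So $M$ has edges of valence $1$, $2$ and $4$, its vertex links are not circles, and your ``key point'' --- that the four edges at each vertex provide exactly enough moves to close the links and flank each edge by two squares --- is precisely what fails: four available edges at $w$ produce four incident squares, not two.

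Your Euler characteristic is right ($M$ has $264$ vertices, $672$ edges and $384$ squares, so $\chi(M)=-24$), but $\chi$ together with orientability determines the genus only \emph{after} one knows the homotopy type is that of a closed surface, and that is the entire content of the proposition; it cannot be obtained by a local manifold check on $M$. Any correct argument must genuinely deform the complex: for instance, the free faces above can be collapsed, and such collapses cascade (once the squares whose two movers head onto a common edge are gone, further edges such as ``particle $1$ moves from $v$ onto $e$ with another particle parked on $e$'' become free), but showing that this process terminates in an honest closed orientable surface is a global argument, not a profile-by-profile link verification --- and the valence-four edges show that $M$ itself can never serve as the surface. As written, your proposal establishes only that $\Conf_3(B_4)$ is homotopy equivalent to a connected $2$-dimensional complex with $\chi=-24$, which does not distinguish $\Sigma_{13}$ from, say, a wedge of spheres and circles, let alone from the non-orientable surface of the same Euler characteristic.
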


\begin{prop}\label{prop:conf-2-banana}
  For any $k\ge3$ the space $\Conf_2(B_k)$ is homotopy equivalent to a
  connected graph of first Betti number at least $k-1$.
\end{prop}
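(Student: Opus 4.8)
The plan is to first replace $\Conf_2(B_k)$ by a $1$-dimensional model and then read off the lower bound on the first Betti number. Write $a,b$ for the two vertices of $B_k$ and $e_1,\dots,e_k$ for its edges. Connectivity is immediate: since $k\ge 2$ there is always a free edge along which to route one particle past the other, so any two configurations are joined by a path.

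The crux is to show that $\Conf_2(B_k)$ has the homotopy type of a graph, i.e.\ of a $1$-dimensional complex. I would start from the combinatorial model of \autoref{prop:combinatorial-model}, which (here $W=\emptyset$, so the dimension is $\min\{2,|V_{\ge 2}|\}=2$) is a cube complex $K\simeq\Conf_2(B_k)$ of dimension $2$. Every $2$-cube of $K$ records both particles leaving the two vertices $a,b$ simultaneously along two chosen edges, and at the combinatorial level a non-trivial $2$-cycle would have to be supported on a product of two \emph{disjoint} cycles of $B_k$. The decisive observation is that $B_k$ contains no two disjoint cycles: every embedded cycle runs from $a$ to $b$ and back, hence meets both $a$ and $b$, so any two cycles intersect. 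I would make this precise by equipping $K$ with a discrete Morse function of Farley--Sabalka type and checking that the absence of two disjoint cycles forces every critical cell to have dimension $\le 1$; the resulting Morse complex is then a graph homotopy equivalent to $\Conf_2(B_k)$.

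With $1$-dimensionality in hand the Betti bound is essentially free. One clean route is Euler characteristic: passing to a sufficiently subdivided $B_k$ and counting the cells of Abrams' discretised model (ordered pairs of disjoint closed cells) gives $\chi(\Conf_2(B_k))=2+k-k^2$, so for a connected graph $b_1=1-\chi=k^2-k-1$, which exceeds $k-1$ for all $k\ge 2$. Alternatively, and closer to the technique of \autoref{prop:homology-injection}, I would exhibit the classes directly: for $j=2,\dots,k$ the two edges $e_1,e_j$ form a circle $C_{1j}\subset B_k$, and the inclusion induces a class $z_{1j}\in H_1(\Conf_2(B_k);\Z)$ coming from the generator of $H_1(\Conf_2(C_{1j}))\cong H_1(\Conf_2(S^1))\cong\Z$. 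Linear independence of $z_{12},\dots,z_{1k}$ can be detected by collapsing all edges except $e_1$ and $e_j$ and turning the collapsed subgraph into a sink, exactly as in \autoref{prop:homology-injection}, which separates each $z_{1j}$ from the others.

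The main obstacle is the middle step: turning ``$B_k$ has no two disjoint cycles'' into a rigorous proof that $K$ carries no essential $2$-cells. Verifying $H_2(K)=0$ by itself is not quite enough to conclude a $1$-dimensional homotopy type, so I expect the real work to lie in setting up the Morse matching on the combinatorial model and in the bookkeeping of its critical cells; the Euler-characteristic count above then serves as a useful consistency check on the final answer.
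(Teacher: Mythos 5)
There is a genuine gap, and it sits exactly at the central claim of the proposition. You acknowledge that the step ``$\Conf_2(B_k)$ has the homotopy type of a graph'' is not actually carried out, only reduced to a hoped-for Morse matching whose existence you justify by the heuristic that $B_k$ contains no two disjoint cycles. That heuristic is false as a general principle for two-particle configuration spaces: $K_5$ and $K_{3,3}$ also contain no two disjoint cycles (they have too few vertices), yet $\Conf_2(K_5)$ is homotopy equivalent to a closed orientable surface, not to a graph. So no argument that derives homotopy $1$-dimensionality solely from the absence of disjoint cycles can work; likewise your claim that ``a non-trivial $2$-cycle would have to be supported on a product of two disjoint cycles'' fails for $\Conf_2(K_5)$, whose fundamental class is no such product. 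Any correct proof must use structure specific to $B_k$, and this is where the paper's actual work lies. The paper's argument is elementary and avoids Morse bookkeeping entirely: in the combinatorial model of \autoref{prop:combinatorial-model}, a $2$-cube consists of the two particles moving toward the \emph{two distinct} vertices along two edges, and the boundary $1$-cube in which particle $1$ sits in the interior of its edge is a \emph{free face} --- it cannot lie in any other $2$-cube, because every edge of $B_k$ has both vertices as endpoints, so the only way to extend that $1$-cube to a $2$-cube would require particle $1$ to move toward the same vertex as particle $2$, which the model forbids. Collapsing each $2$-cube across this free face yields a graph.

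By contrast, your Betti number estimate is fine, and is in fact a genuinely different (and quantitatively stronger) route than the paper's. The Euler characteristic of the sufficiently subdivided discretized model does come out to $\chi(\Conf_2(B_k))=2+k-k^2$, and since $b_1 = 1 + b_2 - \chi \ge 1-\chi = k^2-k-1 \ge k-1$, this bound does not even require the $1$-dimensionality you were unable to establish. The paper instead deduces $b_1\ge k-1$ from \autoref{prop:adding-particles-banana}: $H_1(\Conf_1(B_k))=H_1(B_k)\cong\Z^{k-1}$ is a direct summand of $H_1(\Conf_2(B_k))$. (Your alternative route via embedded circles and collapse maps would need extra care, since \autoref{prop:homology-injection} leans on an articulation vertex and $B_k$ has none; but this is moot given the Euler characteristic argument.) Still, the proposition asserts the homotopy type of a graph, not merely a homology bound, so the proposal as it stands does not prove the statement.
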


\begin{prop}\label{prop:conf-b4-injects-bk}
  The map $\Conf_3(B_4) \hookrightarrow \Conf_3(B_k)$ for $k\ge 4$
  induces an injection
  \[ H_1(\Conf_3(B_4)) \hookrightarrow H_1(\Conf_3(B_k)). \]
\end{prop}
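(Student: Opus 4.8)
The plan is to produce, uniformly in $k$, a one-sided inverse on first homology by collapsing the superfluous edges to a configuration space with sinks, and thereby to reduce the entire family of inclusions to a single map. I will use field coefficients: by \autoref{prop:conf-3-b4} the space $\Conf_3(B_4)$ is a closed orientable surface, so $H_1(\Conf_3(B_4);\Z)\cong\Z^{26}$ is free and rational injectivity of $\iota_*$ already implies integral injectivity. It therefore suffices to argue over a field $\mathbb{F}$ (I will take $\mathbb{F}=\Rat$, though $\Z/2$ works equally well), and the case $k=4$ is the identity.

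For $k\ge 5$ let $\iota\colon\Conf_3(B_4)\hookrightarrow\Conf_3(B_k)$ be the inclusion coming from four of the $k$ edges, and let $H\subset B_k$ be the subgraph made up of the remaining $k-4$ edges together with the two vertices. Collapsing $H$ identifies the two vertices to a single point $w$ and turns each retained edge into a loop, so $B_k/H$ is a four-petal rose with the sink $w$. The collapse induces a map $\Phi\colon\Conf_3(B_k)\to\Conf_3(B_k/H,\{w\})$ on configuration spaces with sinks, and the composite $j\defeq\Phi\circ\iota$ is induced by the fold $B_4\to B_k/H$ sending both vertices of $B_4$ to $w$. The point of this construction is that $j$, together with its target, is manifestly independent of $k$. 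Since an injective composite forces its first factor to be injective, $\iota_*$ is injective for every $k$ as soon as the single map $j_*\colon H_1(\Conf_3(B_4))\to H_1(\Conf_3(B_k/H,\{w\}))$ is injective.

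To establish injectivity of $j_*$ I would exploit that the source is a closed orientable surface. The cup product $H^1\otimes H^1\to H^2\cong\mathbb{F}$ on $\Conf_3(B_4)$ is then a perfect pairing, and over a field $j_*$ is injective on $H_1$ if and only if $j^*$ is surjective on $H^1$. Writing $[\Sigma]$ for the fundamental class, consider the bilinear form $b(x,y)=\langle x\cup y,\,j_*[\Sigma]\rangle$ on $H^1(\Conf_3(B_k/H,\{w\}))$; by naturality $b$ factors through $\im(j^*)\times\im(j^*)$ and there agrees with the nondegenerate intersection form of the surface. Hence if $b$ has rank $26$, then $\im(j^*)$ carries a nondegenerate form of rank $26$, forcing $\im(j^*)=H^1(\Conf_3(B_4))$ and finishing the proof. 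This reduces the problem to two concrete points: that $j_*[\Sigma]\neq 0$, and that there are classes in the cohomology of the sink-target whose cup products realise a rank-$26$ form when evaluated against $j_*[\Sigma]$.

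The hard part will be this last reduction, because it requires enough control of the cohomology ring of the sink-target (equivalently of $\Conf_3(B_k)$) to see that the surface's intersection pattern survives the fold; this is precisely the kind of information supplied by the Mayer–Vietoris spectral sequence mentioned in the introduction, and I expect the genuine difficulty to lie there rather than in the formal reduction above. A more hands-on alternative, should the ring-theoretic input prove awkward, is to track an explicit symplectic basis of the $26$ generating cycles of $\Conf_3(B_4)$ through the combinatorial model of \autoref{prop:combinatorial-model} and to show, exactly as in the proof of \autoref{prop:homology-injection}, that their images under $j$ are assembled from pairwise distinct surviving cube-edges, so that no nontrivial linear combination can cancel. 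The main obstacle in that approach is the bookkeeping for genus $13$ and the careful analysis of which edges cancel when the two vertices are folded together.
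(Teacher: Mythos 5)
Your reduction has a fatal flaw: the composite $j=\Phi\circ\iota$ is \emph{never} injective on $H_1$, so the statement you reduce everything to cannot be established by any amount of further work. To see this, note that $j$ factors through the all-sinks space: the fold $B_4\to B_k/H$ sends both vertices to the sink $w$, so $j$ factors as $\Conf_3(B_4)\subset\Conf_3(B_4,V(B_4))\to\Conf_3(B_k/H,\{w\})$, where the first map relaxes the non-collision condition at the two vertices. But $H_1(\Conf_3(B_4,V(B_4));\Rat)$ has rank only $9$: as the paper's own proof of this proposition shows (via Theorem D of the cited work of the first author), once the vertices are sinks all H-classes become trivial and $H_1(\Conf_3(B_k,V(B_k)))\cong H_1(B_k)^{\oplus 3}$, generated by single particles running around embedded circles with the other two particles parked on a sink. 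Alternatively, bound your target directly: $H_1(\Conf_3(B_k/H,\{w\}))$ is generated by the $3\times 4=12$ classes in which one particle traverses one petal while the others sit at $w$, so it has rank at most $12$. Either way, $H_1(\Conf_3(B_4))\cong\Z^{26}$ (genus $13$) cannot inject: the kernel of $j_*$ has rank at least $17$, and your bilinear form $b(x,y)=\langle x\cup y, j_*[\Sigma]\rangle$ has rank at most $12$, never $26$. The conceptual point is that collapsing to a graph with a single sink vertex is lossy: it retains only the ``global loop'' part of $H_1$ and kills precisely the exchange/star classes that account for most of the homology of the surface $\Conf_3(B_4)$.

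This is exactly why the paper does not argue by a single quotient map. Its proof runs the Mayer--Vietoris spectral sequence for the cover of $\Conf_3(B_k)$ by neighbourhoods of the two vertices, so that the associated graded of $H_1(\Conf_3(B_k))$ has \emph{two} pieces: $E^\infty_{1,0}[k]\cong H_1(\Conf_3(B_k,V(B_k)))$, which is precisely the sink-type information your collapse sees, and $E^\infty_{0,1}[k]$, assembled from the local classes $H_1(\Conf_3(\Star_{v_i}[k]))$ at the two vertices, which your collapse destroys; the real work (compatibility of the splittings with the differential $d_2$) takes place in that second piece. Your closing remark that the genuine difficulty lies in the spectral sequence is correct, but your formal reduction discards exactly the part of the homology where that difficulty lives. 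Note also that, even on its own terms, your write-up defers the essential step (nontriviality of $j_*[\Sigma]$ and realizing a rank-$26$ form) to arguments you do not carry out, so it is a plan rather than a proof; the point above is that this particular plan cannot be completed.
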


\begin{prop}\label{prop:adding-particles-banana}
  For each $n\ge m$ and each graph $G$ with at least one essential
  vertex there exists a map
  \[
  \Conf_m(G) \to \Conf_n(G)
  \]
  which composed with the forgetful map
  \[
  \Conf_n(G) \to \Conf_m(G)
  \]
  is homotopic to the identity.  In particular, we have that
  $H_*(\Conf_m(G))$ is a direct summand of $H_*(\Conf_n(G))$.
\end{prop}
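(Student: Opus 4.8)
The plan is to build the section one particle at a time, realising the extra particle as an \emph{insertion at an essential vertex}. First I would reduce to the case $n=m+1$. The forgetful maps compose, so if for every $m$ there is a map $s_m\colon\Conf_m(G)\to\Conf_{m+1}(G)$ whose composite with the forgetful map $F_m\colon\Conf_{m+1}(G)\to\Conf_m(G)$ is homotopic to the identity, then the iterated composite $s_{n-1}\circ\cdots\circ s_m$ is a homotopy section of the iterated forgetful map $\Conf_n(G)\to\Conf_m(G)$ (pre- and post-composing the homotopy $F_m\circ s_m\simeq\id$ with continuous maps and telescoping). By hypothesis $G$ has an essential vertex $v$; I fix it together with three distinct incident edges $e_1,e_2,e_3$, available since $v$ has valence at least $3$. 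These three edges are what will create room for one more particle.

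To define $s_m$ I would fix a short closed arc $A\subset e_1$ having one endpoint at $v$, and proceed in two steps. The first and essential step is to construct a homotopy $\phi^t\colon\Conf_m(G)\to\Conf_m(G)$, $t\in[0,1]$, with $\phi^0=\id$, that pushes every configuration off $A$: any particle lying on $A$ is slid outward along $e_1$ past $A$ and, whenever that direction is blocked by other particles, is instead routed through $v$ onto $e_2$ or $e_3$, so that $\phi^1$ takes values in the subspace $\Conf_m(G\setminus A^{\circ})$ of configurations avoiding the interior of $A$. The second step simply appends a fixed interior point $p\in A^{\circ}$, which is automatically distinct from the $m$ points of $\phi^1(X)$, and I set
\[
s_m(X)=\bigl(\phi^1(X),\,p\bigr)\in\Conf_{m+1}(G).
\]
Composing with $F_m$ returns $\phi^1(X)$, and since $\phi^t$ is a homotopy from the identity this composite is homotopic to $\id_{\Conf_m(G)}$, exactly as required.

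The homological statement then follows formally. Applying $H_*(-;A)$ to $s_m$ and to $F_m$ gives homomorphisms whose composite is the identity of $H_*(\Conf_m(G);A)$, for \emph{any} coefficient ring $A$, since we have produced a homotopy retraction at the level of spaces. Hence $(s_m)_*$ is a split injection and $H_*(\Conf_m(G))$ is a direct summand of $H_*(\Conf_{m+1}(G))$; iterating gives the summand statement for $H_*(\Conf_n(G))$.

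The hard part is the first step: constructing $\phi^t$ continuously in the configuration. On a single edge this is genuinely impossible, because a particle sitting in the middle of $A$ cannot be slid off continuously — it must choose a side — and this is precisely the phenomenon that makes the statement fail for an interval, a graph with no essential vertex. When $G$ has a leaf the construction is the familiar one — take $A$ at the tip of the leaf, where pushing off $A$ is an honest deformation retraction — and the role of the essential vertex is exactly to supply a substitute for such a free end: the third edge provides an escape route, so that the ambiguous ``middle'' particles can be diverted onto $e_2$ or $e_3$ rather than forced to one side along $e_1$, and the cascade of particles pushed towards $v$ is absorbed by the extra edge-directions there instead of piling up. Making this routing simultaneously continuous in $X$ and compatible across the configurations where a particle coincides with $v$ is the main technical obstacle, and I would resolve it by carrying out the construction in the cube-complex model of \autoref{prop:combinatorial-model}, where ``moving a particle from a vertex onto an incident edge'' is one of the generating cubical moves and the availability of a free edge-direction at $v$ becomes a discrete, checkable condition, and then transporting the resulting cellular section back across the deformation retraction.
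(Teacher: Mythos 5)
Your reduction to $n=m+1$ and the formal deduction of the homology splitting are fine, but the core of your construction --- the homotopy $\phi^t$ of Step 1 --- is not a deferrable technicality: it provably does not exist. Take $G=Y$ and $m=2$, which satisfies the hypotheses. Your $\phi^1$ would be a map $\Conf_2(Y)\to\Conf_2(Y\setminus A^{\circ})$ whose composite with the inclusion back into $\Conf_2(Y)$ is homotopic to the identity. But $Y\setminus A^{\circ}$ is a disjoint union of two arcs (the part of $e_1$ beyond $A$, and $e_2\cup\{v\}\cup e_3$), so every path component of $\Conf_2(Y\setminus A^{\circ})$ is contractible; since $\Conf_2(Y)$ is connected, its image under $\phi^1$ lies in a single such component, so the composite is null-homotopic. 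Since $\Conf_2(Y)\simeq S^1$ (see \autoref{fig:conf_2_Y}), this contradicts $\phi^1\simeq\id$. The conceptual reason is precisely the continuity issue you half-identify: which particles leave $A$ outward along $e_1$ and which leave through $v$ is a locally constant partition of the labels among the components of $G\setminus A^{\circ}$, so the configuration-dependent routing your construction needs is exactly what a continuous map cannot do. Carrying out the construction in the cube-complex model of \autoref{prop:combinatorial-model} cannot repair this, because the obstruction is homotopy-theoretic and model-independent.

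The deeper flaw is the overall shape of your section, $s_m(X)=(\phi^1(X),p)$ with a configuration-\emph{independent} insertion point $p$: any such map forces $\id_{\Conf_m(G)}$ to compress, up to homotopy, into $\Conf_m(G\setminus\{p\})$, and for the leafless graphs this proposition is actually needed for --- the banana graphs --- even that weaker statement fails. For $G=B_4$, $m=3$, let $c\colon B_4\to S^1$ restrict to a homeomorphism on the cycle $e_1\cup e_2$ and send $e_3\cup e_4$ into the image of $e_2$, and let $\alpha\in H^1(\Conf_3(B_4);\Rat)$ be the pullback of the generator of $H^1(S^1)$ under $c$ composed with projection to particle $1$. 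On $\Conf_3(B_4\setminus\{p\})$ with $p\in e_1^{\circ}$ this composite misses the point $c(p)$, so $\alpha$ restricts to zero there; yet $\alpha\neq0$ (particle $1$ can wind around $e_1\cup e_2$ while particles $2,3$ sit on $e_3,e_4$), and since $\Conf_3(B_4)\simeq\Sigma_{13}$ by \autoref{prop:conf-3-b4}, Poincaré duality provides $\beta$ with $\langle\alpha\smile\beta,[\Sigma_{13}]\rangle\neq0$ --- impossible if $[\Sigma_{13}]$ were in the image of $H_2(\Conf_3(B_4\setminus\{p\}))$, as it would be if $\id\simeq\iota\circ\phi^1$. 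The paper's proof differs exactly at this point: the added particles are \emph{not} inserted at fixed positions. They are placed on $e_1$ next to $v$ in cubes where no original particle moves from $e_1$ into $v$, and in the remaining cubes they temporarily dodge onto $e_2$ while the original particle detours over $e_3$, each cube of $\Conf_m(G)$ being stretched over a strip of cubes in $\Conf_n(G)$. Letting the inserted particles move as a function of the configuration is not a refinement of your plan; it is its replacement.
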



\begin{proof}[{Proof of Theorem \ref{thm:tc-banana-graphs}}]
  The second case follows from the fact that $\Conf_n(B_k)$ is in that
  case a connected graph with first Betti number at least two, and so has
  topological complexity 2 by \autoref{lem:tc-graph}. For $n=1$ this
  is immediate and for $n=2$ it follows from
  \autoref{prop:conf-2-banana}.

  In remains to show that $\Conf_3(B_3)$ is homotopy equivalent to a
  1-dimensional complex of first Betti number at least two. This can be seen by
  collapsing cells in the combinatorial model as follows. The
  combinatorial model is 2-dimensional because there are two essential
  vertices.  If one of the moving particles in a 2-cell moves onto the
  edge where the bound particle (i.e.\ the particle that does not move freely)
  is, then the 1-cell in the boundary of this 2-cell where this moving particle
  is bound on the edge is not attached to any other 2-cell.  Therefore, we can
  collapse the 2-cell onto the other three 1-cells in its boundary.  After
  collapsing all such cells we can assume that in each 2-cell the bound particle
  is on an edge where none of the other two particles move along.

  For a 2-cell where the two moving particles move on the same edge
  consider the 1-cell in its boundary where one of the particles is
  bound on the edge.  Because we just collapsed all 2-cells where a
  particle is moving on the edge with a bound particle this 1-cell is
  not contained in any other 2-cell either, so we can collapse it and
  assume that in each 2-cell there is always exactly one particle on
  each edge.

  Given such a 2-cell we now consider the 1-cell where one of the
  particles is on the vertex.  There are two additional potential
  2-cells that are incident to that 1-cell, corresponding to the bound
  particle leaving the vertex for one of the remaining two edges.  But
  each of those 2-cells has two particles on a single edge, and since
  we collapsed all such 2-cells there is no other 2-cell attached and
  we can finish the collapse of the combinatorial model onto a
  1-dimensional cube complex as claimed.  The first Betti number of this graph is at
  least 2 because of \autoref{prop:adding-particles-banana}.

  \vspace{1em}

  We will now prove the first case.  The dimension of the
  combinatorial model of $\Conf_n(B_k)$ for $n\ge3$ and $k\ge4$ is 2,
  giving an upper bound on the topological complexity of $4$ via
  \autoref{lem:upperbound}.  We will now show that we can use
  \autoref{lem:lowerbound} to provide a lower bound of 4 as well.

  By \autoref{lem:tc-surface} all closed surfaces of genus at least
  two have topological complexity 4, so the case $\TC(\Conf_3(B_4)) =
  4$ follows from \autoref{prop:conf-3-b4}.  By
  \autoref{prop:adding-particles-banana} and \autoref{lem:retract} it
  suffices to show $\TC(\Conf_3(B_k))=4$ for all $k\ge 4$.

  \vspace{1em}

  Denote by
  \[
  \phi_k\colon \Conf_3(B_4) \hookrightarrow \Conf_3(B_k)
  \]
  the map induced by the inclusion $B_4\hookrightarrow B_k$ for $k\ge
  4$.  By \autoref{prop:conf-b4-injects-bk}, the induced map
  \[ H^1(\phi_k)\colon H^1(\Conf_3(B_k);\Rat) \to
  H^1(\Conf_3(B_4);\Rat) \] is surjective.  In the proof of the lower
  bound of $\TC(\Conf_3(B_4))=\TC(\Sigma_{13})$ in \cite{Far03} Farber
  constructs four classes $u_1, u_2,u_3,u_4\in H^1(\Conf_3(B_4))$ such
  that the cup product of the associated zero divisors
  $\overline{u_1}\cdots\overline{u_4}$ is non-trivial in the
  cohomology of $\Conf_3(B_4)^{\times 2}$.  Now choose preimages
  $v_1,v_2,v_3$ and $v_4$ of these four classes under $H^1(\phi_k)$
  and look at the product of the corresponding zero-divisors
  $\overline{v_1}\cdots\overline{v_4}$ in
  $H^4\left(\Conf_4(B_k)^{\times 2}\right)$.  By construction, this
  element maps to $\overline{u_1}\cdots\overline{u_4}\neq 0$ under the
  ring map
  \[
  H^*(\phi_k^{\times 2})\colon H^*(\Conf_3(B_k)^{\times 2};\Rat) \to
  H^*(\Conf_3(B_4)^{\times 2};\Rat)
  \]
  so it has to be non-trivial as well.  This proves that
  $\TC(\Conf_3(B_k)) = 4$ for all $k\ge 4$.
\end{proof}

We will now prove the stated propositions.

\begin{proof}[{Proof of \autoref{prop:conf-2-banana}}]
  Consider a 2-cube in the combinatorial model of $\Conf_2(B_k)$ from
  \autoref{prop:combinatorial-model}.  This cube has two coordinates,
  corresponding to the movement of the particles 1 and 2 towards the
  two vertices: increasing the horizontal coordinate moves particle
  $1$ from the interior of some edge towards one of the two vertices,
  and increasing the vertical coordinate moves $2$ in the same way
  towards the other vertex.  Restricting to a face of the 2-cube
  corresponds to keeping the corresponding particle on the vertex or
  in the interior of the edge and moving the other particle towards a
  vertex. Exactly one of the four 1-cubes in the boundary of the
  2-cube keeps particle $1$ in the interior of some edge.  This 1-cube
  is \emph{not} incident to any other 2-cube because particle $1$
  cannot move towards the same vertex as particle $2$.  Therefore, we
  can deform the 2-cell by collapsing this 1-cube onto the other three
  1-cubes.

  Repeating this process for all 2-cubes defines a homotopy
  equivalence to a graph. By \autoref{prop:adding-particles-banana}, the first
  Betti number has to be at least $k-1$.
\end{proof}

\begin{proof}[{Proof of \autoref{prop:adding-particles-banana}}]
  We will define a map between the combinatorial models with these
  properties, which by composition with the deformation retraction and
  inclusion determines a map of the ordinary configuration spaces.
  Choose an essential vertex $v$ and three edges $e_1, e_2, e_3$
  incident to $v$.

  For each $k$-cube in the combinatorial model of $\Conf_m(G)$ where
  no particle moves from $e_1$ towards $v$ simply add the $n-m$
  missing particles in ascending order onto $e_1$ between $v$ and all
  other particles on $e_1$.

  Given a cube where one particle $p$ moves from $e_1$ towards $v$ we
  consider the following sequence of movements: move the $n-m$ new
  particles via $v$ onto $e_2$, move $p$ via $v$ onto $e_3$, move the
  $n-m$ particles back onto $e_1$ in the same way and finally move $p$
  onto $v$.  These movements are independent of the movements of the
  other particles in the chosen cube, so we can replace the movement
  of $p$ with this sequence.  This defines a union of cells in the
  combinatorial model of $\Conf_n(G)$, and we define our map to
  stretch the cube we started with onto this strip of cells, see
  \autoref{fig:strip-of-cells}.  It is straightforward to check that
  this gives a continuous map, i.e.\ that the restriction to the
  boundaries of a cell determine the correct map.

  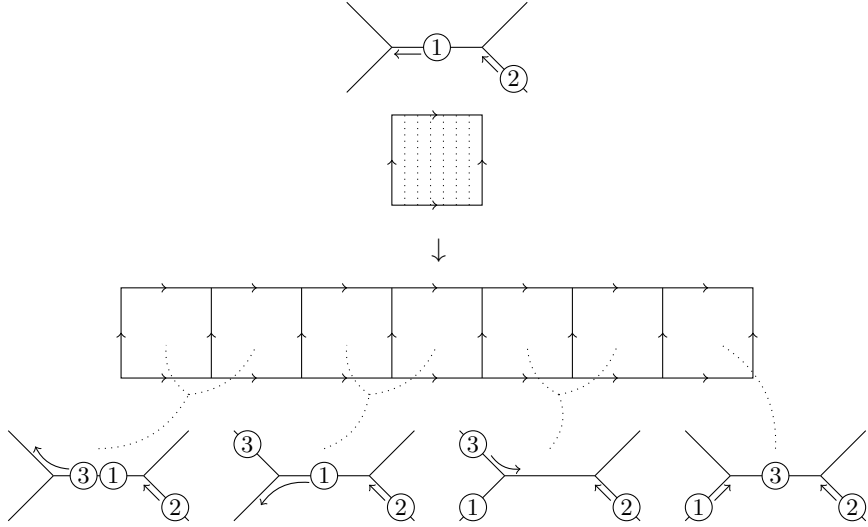
\begin{figure}[htpb]
    \centering
    \begin{tikzpicture}
      \newcommand{\Graph}{ \draw (-1, 0) -- (1, 0); \draw (-2, -1) --
        (-1, 0); \draw (-2, 1) -- (-1, 0); \draw (2, -1) -- (1, 0);
        \draw (2, 1) -- (1, 0); }

      \begin{scope}[shift={(0, 5)}, scale=0.6]
        \Graph
        \particleNr{(0, 0)}{1};
        \particleNr{(1.7, -0.7)}{2}; \draw[->] (-0.35, -0.15) to
        (-0.95, -0.15); \draw[->] (1.35, -0.55) to (1.0, -0.2);
      \end{scope}

      \begin{scope}[shift={(0, 3.5)}, scale=0.6]
        \draw (-1, -1) -- (1, -1) -- (1, 1) -- (-1, 1) -- cycle;
        \draw[->] (-.005, -1) -- (.005, -1); \draw[->] (-.005, 1) --
        (.005, 1); \draw[->] (-1, -.005) -- (-1, .005); \draw[->] (1,
        -.005) -- (1, .005); \draw[dotted] (-0.7143, -1) -- (-0.7143,
        1); \draw[dotted] (-0.4286, -1) -- (-0.4286, 1); \draw[dotted]
        (-0.1429, -1) -- (-0.1429, 1); \draw[dotted] (0.1428, -1) --
        (0.1428, 1); \draw[dotted] (0.4285, -1) -- (0.4285, 1);
        \draw[dotted] (0.7142, -1) -- (0.7142, 1);
      \end{scope}

      \node[rotate=-90] at (0, 2.3) {$\to$};

      \begin{scope}[shift={(-1.8, 1.2)}, scale=0.6]
        \draw (-4, -1) -- (10, -1) -- (10, 1) -- (-4, 1) -- cycle;

        \draw[->] (-3.005, -1) -- (-2.995, -1); \draw[->] (-3.005, 1)
        -- (-2.995, 1); \draw[->] (-1.005, -1) -- (-0.995, -1);
        \draw[->] (-1.005, 1) -- (-0.995, 1); \draw[<-] (1.005, -1) --
        (0.995, -1); \draw[<-] (1.005, 1) -- (0.995, 1); \draw[<-]
        (3.005, -1) -- (2.995, -1); \draw[<-] (3.005, 1) -- (2.995,
        1); \draw[<-] (5.005, -1) -- (4.995, -1); \draw[<-] (5.005, 1)
        -- (4.995, 1); \draw[<-] (7.005, -1) -- (6.995, -1); \draw[<-]
        (7.005, 1) -- (6.995, 1); \draw[<-] (9.005, -1) -- (8.995,
        -1); \draw[<-] (9.005, 1) -- (8.995, 1);

        \draw[->] (-2, -.005) -- (-2, .005); \draw[->] (-4, -.005) --
        (-4, .005); \draw[->] (2, -.005) -- (2, .005); \draw[->] (0,
        -.005) -- (0, .005); \draw[->] (4, -.005) -- (4, .005);
        \draw[->] (6, -.005) -- (6, .005); \draw[->] (8, -.005) -- (8,
        .005); \draw[->] (10, -.005) -- (10, .005); \draw (-2, -1) --
        (-2, 1); \draw (0, -1) -- (0, 1); \draw (2, -1) -- (2, 1);
        \draw (4, -1) -- (4, 1); \draw (6, -1) -- (6, 1); \draw (8,
        -1) -- (8, 1);
      \end{scope}

      \begin{scope}[shift={(0, -0.7)}, scale=0.6]
        \begin{scope}[shift={(-7.5, 0)}]
          \Graph
          \particleNr{(0.33, 0)}{1};
          \particleNr{(-0.33, 0)}{3};
          \particleNr{(1.7, -0.7)}{2}; \draw[->, bend left] (-0.65,
          0.15) to (-1.4, 0.65); \draw[->] (1.35, -0.55) to (1.0,
          -0.2);

          \draw[dotted, bend right] (0, 0.6) to (2, 1.8);
          \draw[dotted, bend left] (2, 1.8) to (1.5, 2.9);
          \draw[dotted, bend right] (2, 1.8) to (3.5, 2.9);
        \end{scope}
        \begin{scope}[shift={(-2.5, 0)}]
          \Graph
          \particleNr{(0, 0)}{1};
          \particleNr{(-1.7, 0.7)}{3};
          \particleNr{(1.7, -0.7)}{2}; \draw[->, bend right] (-0.33,
          -0.15) to (-1.4, -0.65); \draw[->] (1.35, -0.55) to (1.0,
          -0.2);

          \draw[dotted, bend right] (0, 0.6) to (1, 1.8);
          \draw[dotted, bend left] (1, 1.8) to (0.5, 2.9);
          \draw[dotted, bend right] (1, 1.8) to (2.5, 2.9);
        \end{scope}
        \begin{scope}[shift={(2.5, 0)}]
          \Graph
          \particleNr{(-1.7, -0.7)}{1};
          \particleNr{(-1.7, 0.7)}{3};
          \particleNr{(1.7, -0.7)}{2}; \draw[<-, bend left] (-0.65,
          0.15) to (-1.3, 0.45); \draw[->] (1.35, -0.55) to (1.0,
          -0.2);

          \draw[dotted, bend right] (0, 0.6) to (0.2, 1.8);
          \draw[dotted, bend left] (0.2, 1.8) to (-0.5, 2.9);
          \draw[dotted, bend right] (0.2, 1.8) to (1.5, 2.9);
        \end{scope}
        \begin{scope}[shift={(7.5, 0)}]
          \Graph
          \particleNr{(-1.7, -0.7)}{1};
          \particleNr{(0, 0)}{3};
          \particleNr{(1.7, -0.7)}{2}; \draw[->] (-1.35, -0.55) to
          (-1.0, -0.2); \draw[->] (1.35, -0.55) to (1.0, -0.2);

          \draw[dotted, bend right] (0, 0.6) to (-1.2, 2.9);
        \end{scope}
      \end{scope}
    \end{tikzpicture}
    \caption{Replacing a 2-cell by a strip of 2-cells to construct a
      map $\Conf_m(G)\to \Conf_n(G)$ for $m<n$. The vertical direction
      in the cubes corresponds to the movement of particle 2, the
      seven small rectangles above are stretched to the seven cubes
      below.}
    \label{fig:strip-of-cells}
  \end{figure}

  \vspace{1em}

  By construction, the composition with the map forgetting the $n-m$
  new particles gives almost the identity, only the particles moving
  from $e_1$ towards $v$ briefly move onto $e_3$.  Up to homotopy,
  however, the map is the identity.
\end{proof}

\subsection{Proof of \autoref{prop:conf-b4-injects-bk}}
The proof of \autoref{prop:conf-b4-injects-bk} is a combination of
parts of proofs in \cite{Luetgehetmann17}.  For the convenience of the
reader we reproduce the relevant parts here.

\subsection{A Mayer-Vietoris spectral sequence for configuration
  spaces}
\label{sec:mv-spectral-sequence}
In this section, we will use the basic classes in the first homology of
configuration spaces of graphs to prove \autoref{prop:conf-b4-injects-bk}.
These are given by star classes, H-classes and $S^1$-classes, which are elements
in the first homology of configuration spaces of any star graph, the graph that
looks like the letter H and the circle $S^1$, respectively.
$S^1$-classes are given by all particles moving around the circle
simultaneously, H-classes are given by ``exchange movements'' like in
\autoref{ex:sinks-interval} and star classes are shufflings of particles sitting
on the leaves via the central vertex.
For more details on those classes, see for example \cite{Luetgehetmann17}.

We now recall the construction of the Mayer-Vietoris spectral sequence
for configuration spaces discussed in \cite{CheLue16} .
\begin{definition}[Mayer-Vietoris spectral sequence]
  Let $J$ be a countable ordered index set and $\{V_j\}_{j\in J}$ an
  open cover of $X$, then we define the following countable open cover
  $\mathcal{U}(\{V_j\})$ of $\Conf_n(X)$: for each $\phi\colon
  \mathbf{n}\to J$ we define $U_\phi$ to be the set of all those
  configurations where each particle $i$ is in $V_{\phi(i)}$, i.e.
  \[
  U_\phi \defeq \bigcap_{i\in\mathbf{n}} \pi_i^{-1}\left( V_{\phi(i)}
  \right).
  \]
  These sets are open and cover the whole space, so they define a
  spectral sequence
  \[
  E^1_{p,q} = \bigoplus_{\phi_0<\cdots<\phi_p} H_q\left(
    U_{\phi_0}\cap\cdots\cap U_{\phi_p} ;\Rat\right) \Rightarrow
  H_*\left( \Conf_n(X);\Rat \right),
  \]
  where the indexing of the direct sum is over all ordered sets $\phi_0 < \cdots
  < \phi_p$ of maps $\phi_i$ as described above, converging to the homology of
  the whole space. Here, we chose an arbitrary ordering of the maps $\phi_i$,
  for example lexicographic ordering. For an elementary proof of the convergence
  of this spectral sequence, see \cite[Proposition 2.1, p. 25]{Luetgehetmann17}.
\end{definition}
For brevity, we will also write
\[
U_{\phi_0\cdots\phi_p} \defeq U_{\phi_0} \cap \cdots \cap U_{\phi_p}.
\]
The boundary map $d_1$ is given by the alternating sum of the face
maps induced by
\[
U_{\phi_0}\cap\cdots\cap U_{\phi_p} \hookrightarrow
U_{\phi_0}\cap\cdots\cap \widehat{U_{\phi_i}}\cap\cdots\cap U_{\phi_p}
\]
forgetting the $i$-th open set from the intersection.  Of course, this
construction generalizes to configuration spaces with sinks.  From now
on we will suppress the rational coefficients in our notation.

\begin{proof}[{Proof of \autoref{prop:conf-b4-injects-bk}}]
  Endow $B_k$ with a path metric such that each edge has length 1.
  Consider the open cover of $B_k$ given by the open balls $V_1$ and
  $V_2$ of radius $2/3$ around $v_1$ and $v_2$, respectively.  The
  intersection $V_1\cap V_2$ is given by a disjoint union of intervals
  of length $1/3$.  Pulling the particles out of this intersection if
  possible one can see that each intersection $U_{\phi_0\cdots\phi_p}$
  is homotopy equivalent to a disjoint union of spaces of the form
  \begin{align}\label{eq:identification-intersections}
    \Conf_{S_1}(\Star_{v_1}) \times \Conf_{S_\cap}(\sqcup_k I ) \times
    \Conf_{S_2}(\Star_{v_2}),
  \end{align}
  where $\Star_v$ is a small contractible neighbourhood of $v$, $I$ is
  an interval and $S_1\sqcup S_\cap\sqcup S_2 = \{1,2,3\}$.

  Let us now compute the bottom row of the $E^2$-page of the spectral
  sequence $E^*_{\bullet,\bullet}[k]$ associated to the open cover of
  $\Conf_3(B_k)$.  The bottom row of the $E^1$-page is given at
  position $(p, 0)$ by the direct sum of all terms of the form
  $H_0(U_{\phi_0\cdots\phi_p})$.  If we now write the same spectral
  sequence for $\Conf_3(B_k, V(B_k))$, i.e.\ with both vertices turned
  into sinks, then we see by the identification
  \eqref{eq:identification-intersections} that the bottom rows of both
  $E^1$-pages agree (including differentials).  Since in the sink
  case all $U_{\phi_0\cdots\phi_p}$ have contractible path components,
  all higher rows of the $E^1$-page are trivial.  Therefore, the
  bottom row of the $E^2$-pages of both spectral sequences at position
  $p$ is given by $H_p(\Conf_3(B_k, V(B_k)))$.  In particular, this
  gives $E^\infty_{1,0}[k]\cong H_1(\Conf_3(B_k, V(B_k)))$ for both
  spectral sequences.

  By \cite[Theorem D, p. 5]{Luetgehetmann17} the first homology of
  $\Conf_3(B_k, V(B_k))$ is generated by $S^1$-classes and
  $\HH$-classes.  It is straightforward to check that all
  $\HH$-classes in $\Conf_3(B_k, V(B_k))$ are trivial for $k\ge 3$, so
  we can choose a basis of $H_1(\Conf_3(B_k, V(B_k)))$ consisting only
  of individual particles moving along an embedded circle in $B_k$
  with both other particles sitting on one of the sinks.  This means
  that the inclusions
  \[
  \Conf_{\{1\}}(B_k) \sqcup \Conf_{\{2\}}(B_k) \sqcup
  \Conf_{\{3\}}(B_k) \to \Conf_3(B_k, V(B_k))
  \]
  given by putting the remaining particles onto one of the sinks
  induces a surjection in first homology.  Composition with the
  forgetful maps shows that the map is also injective in homology, so
  it in fact induces an $H_1$-isomorphism.  Since
  $H_1(\Conf_1(B_k))=H_1(B_k)$ has $H_1(\Conf_1(B_4))=H_1(B_4)$ as a
  direct summand we get that also $E^\infty_{1,0}[4]=H_1(\Conf_3(B_4,
  V(B_4)))$ is a direct summand of $E^\infty_{1,0}[k]=H_1(\Conf_3(B_k,
  V(B_k)))$.

  \vspace{1em}

  It remains to see that the same splitting is possible for
  $E^\infty_{0,1}[k]$.  The module $E^1_{0,1}[k]$ is by the
  identification \eqref{eq:identification-intersections} given by a
  direct sum of modules of the form
  \[
    H_1(\Conf_{S_1}(\Star_{v_1}[k]) \times \Conf_{S_\cap}(\sqcup_k I)
    \times \Conf_{S_2}(\Star_{v_2}[k])),
  \]
  where $\Star_{v_1}[k]$ and $\Star_{v_2}[k]$ are the stars around the vertices
  $v_1$ and $v_2$.
  It is straightforward to check that the map
  \[
  E^1_{0,1}[k]\supset H_1(\Conf_3(\Star_{v_1}[k]))\oplus
  H_1(\Conf_3(\Star_{v_2}[k])) \to E^2_{0,1}[k]
  \]
  is a surjection and that the images of the two direct summands
  intersect trivially in $E^2_{0,1}[k]$.  Define $Q^{v_1}_2[k]$ as the
  image of the map
  \[
  \bigoplus_{e\in E(\Star_{v_1}[k])}\bigoplus_{|S|=2}
  H_1\left(\Conf_S(\Star_{v_1}[k])\right) \to H_1\left(\Conf_3(\Star_{v_1}[k])\right),
  \]
  where $S\subset\{1,2,3\}$ and the map is induced by adding the third
  particle onto the end of edge $e$ (away from $v_1$).  This submodule
  is everything generated by classes with only two moving particles.
  We now write the module $H_1(\Conf_3(\Star_{v_1}[k]);\Rat)$ as
  $Q^{v_1}_2[k]\oplus Q^{v_1}_3[k]$ for some (arbitrary) choice of
  $Q^{v_1}_3[k]$. Because in $E^1_{1,1}[k]$ there is no term with all three
  particles in one of the two stars, this means that $Q_3^{v_1}[k]$ does not
  intersect the image of the boundary map $d_0$ and therefore is a direct
  summand of $E^1_{0,1}[k]$.  The module $Q^{v_1}_2[k]$ by definition has a
  basis where each basis element has exactly one bound particle (i.e.\ a
  particle that does not move freely).  By adding $d_0$ boundaries to the image
  of such a basis element we can arrange that this bound particle is always on a
  fixed edge $e_0$.  One can now check that the image of $Q^{v_1}_2[k]$ under
  the collapse map to $E^2_{0,1}[k]$ is the same as the image of
  $\overline{Q^{v_1}_2}[k]$, defined as the image of the induced map
  \[
  \bigoplus_{|S|=2} H_1\left(\Conf_S(\Star_{v_1}[k])\right) \to
  H_1\left(\Conf_3(\Star_{v_1}[k])\right),
  \]
  where the third particle is always put onto the edge $e_0$.  There
  are no relations imposed onto the image of $\overline{Q^{v_1}_2}[k]$
  for $v\in\{v_1,v_2\}$ under the quotient map, so we get (by
  repeating the same argument for $v_2$)
  \[
  E^2_{0,1}[k] \cong \overline{Q^{v_1}_2}[k]\oplus Q^{v_1}_3[k]\oplus
  \overline{Q^{v_2}_2}[k]\oplus Q^{v_2}_3[k].
  \]
  Notice that by symmetry we have in fact
  \begin{align}\label{eq:identification-spectral-sequence}
    E^2_{0,1}[k] \cong \overline{Q^{v_1}_2}[k]^{\oplus 2}\oplus
    Q^{v_1}_3[k]^{\oplus 2}.
  \end{align}

  Each $H_1(\Conf_S(\Star_{v}[k]))$ for any finite set $S$ has
  $H_1(\Conf_S(\Star_v[4]))$ as a direct summand: choose a spanning
  tree in the graph $\Conf_S(\Star_v[4])$, then this defines a tree in
  $\Conf_S(\Star_v[k])$ via the inclusion.  Extending this tree to a
  spanning tree shows that the first homology of $\Conf_S(\Star_v[4])$
  is a direct summand of $H_1(\Conf_S(\Star_v[k]))$.  It is
  straightforward to check that this direct sum decomposition respects
  the splitting into $\overline{Q^{v}_2}[k]\oplus Q^v_{3}[k]$ in the
  sense that $\overline{Q^v_2}[4]$ is a direct summand of
  $\overline{Q^v_2}[k]$ and $Q^v_3[4]$ and $Q^v_3[k]$ can be chosen such that
  the former is a direct summand of the latter. This shows that $E^2_{0,1}[4]$
  is a direct summand of $E^2_{0,1}[k]$. Denote the complement by
  $E^2_{0,1}[k,4]$.

  \vspace{1em}

  It remains to show that the boundary map
  \[
  d_2\colon H_2(\Conf_3(B_k, V(B_k))) \to E^2_{0,1}[k] \cong
  E^2_{0,1}[4] \oplus E^2_{0,1}[k,4]
  \]
  preserves this splitting for $k>4$.

  Consider the analogous spectral sequence $\tilde
  E^*_{\bullet,\bullet}[k]$ for the space $\Conf_3(B_k, \{v_2\})$
  instead of $\Conf_3(B_k)$.  In this case we get that the term
  $\tilde E^2_{0,1}[k]$ is given by $\overline{Q^{v_1}_2}[k]\oplus
  Q_3^{v_1}[k]$.  The combinatorial model of $\Conf_3(B_k, \{v_2\})$
  is one dimensional and all 1-cubes have the following form: one
  particle moves from the sink to the other vertex while the other
  particles stay in the sink. Therefore, its first homology is
  generated by individual particles moving along embedded circles with
  the remaining particles fixed on the sink.  Each such class can be
  represented as an element of $\tilde E^\infty_{1,0}[k]$ by looking
  at the intersection of the two open sets where a particle $p$ is in
  the neighbourhood of $v_1$ or $v_2$, respectively, and all other
  particles are in the neighbourhood of $v_2$.  The circle class is
  then represented by the difference of $p$ being in different
  connected components of the disjoint union of intervals given by the
  intersection of the two open subsets of $B_k$.

  This shows that $\tilde E^\infty_{0,1}[k]=\tilde E^3_{0,1}[k]=0$,
  and therefore that
  \[
    \tilde{d}_2\colon H_2(\Conf_3(B_k, V(B_k))) \to \overline{Q^{v_1}_2}[k]
  \oplus Q^{v_1}_3[k]
  \]
  is surjective in this case. Since the second homology of $\Conf_3(B_k,
  \{v_2\})$ is trivial, this map in fact is an isomorphism. This shows with the
  identification \eqref{eq:identification-spectral-sequence} that for the
  spectral sequence for $\Conf_3(B_k)$ we get
  \[
  E^\infty_{0,1} = E^3_{0,1}[k] \cong \overline{Q^{v_1}_2}[k] \oplus
  Q^{v_1}_3[k]
  \]
  because the map $d_2$ is the product of the two isomorphisms $\tilde d_2$ from
  the cases where $v_1$ or $v_2$ is a sink, respectively (we can compare the spectral
  sequences using the induced maps, by naturality). Since we already saw
  that $\overline{Q^{v_1}_2}[4]$ and $Q^{v_1}_3[4]$ are direct
  summands of $\overline{Q^{v_1}_2}[k]$ and $Q^{v_1}_3[k]$,
  respectively, this concludes the proof.
\end{proof}

\section{On a conjecture of Farber and on unordered configuration spaces}

In \cite{Far05} Farber formulated the following
\begin{conj}[Farber]
  Let $G$ be a connected graph with $|V_{\ge3}|\ge2$ and let $n\ge2|V_{\ge3}|$. Then
  \[\TC(\Conf_n(G))=2|V_{\ge3}|.\]
\end{conj}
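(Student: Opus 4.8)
The plan is to follow the architecture of Theorem \ref{thm:tc-highly-articulated}: get the upper bound from the dimension of the combinatorial model and attack the lower bound through the zero-divisor cup-length. For the upper bound, observe that when $n\ge 2|V_{\ge3}|$ the combinatorial model of \autoref{prop:combinatorial-model} has dimension $\min\{n,|V_{\ge3}|\}=|V_{\ge3}|$ (we may assume no valence-$2$ vertices, and there are no sinks), so \autoref{lem:upperbound} gives $\TC(\Conf_n(G))\le 2|V_{\ge3}|$ for \emph{every} connected graph, articulated or not. Hence the entire content of the conjecture is the matching lower bound $\TC(\Conf_n(G))\ge 2|V_{\ge3}|$, which by \autoref{lem:lowerbound} would follow from exhibiting a nonzero product of $2|V_{\ge3}|$ zero-divisors.

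For that lower bound I would try to run the proof of Theorem \ref{thm:tc-highly-articulated} verbatim, now producing two classes for \emph{each} of the $d=|V_{\ge3}|$ essential vertices (the hypothesis $n\ge 2|V_{\ge3}|$ guarantees $\lfloor n/2\rfloor\ge d$). Around each essential vertex $v$ embed a $Y$-graph $G_v$, form a quotient graph with sinks $(\overline{G}_v,W_v)$ by collapsing all edges not incident to $v$ and declaring every other vertex a sink, and build the maps $\Phi_{i,j}\colon\Conf_n(G)\to\Conf_2(\overline{G}_i,W_i)$ together with the tori $z_\sigma$. The combinatorial heart of that argument --- evaluating $\langle\prod u_{j_i,k_i},z_\sigma\rangle$ by localizing each factor to the single non-sink vertex of $\overline{G}_{j_i}$ --- is purely local to $v$ and does not visibly use articulacy, so one expects it to survive once the input injectivity statement is available.

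The first obstacle, and the reason the conjecture is genuinely harder than Theorem \ref{thm:tc-separable-graphs}, is the analogue of \autoref{prop:homology-injection}. Its proof uses articulacy essentially: removing $v$ yields at least two components, so $W_v$ carries at least two sinks and the generator of $H_1(\Conf_2(G_v))$ has a representative meeting two \emph{distinct} sinks, which forces its image to be a nontrivial combination of circle classes. When $v$ is not an articulation, collapsing the edges off $v$ identifies all far endpoints to a single point, $\overline{G}_v$ degenerates to a banana graph $B_d$ with $d=\val(v)$ carrying one non-sink vertex $v$ and one sink, and the exchange cycle of \autoref{fig:generator-two-particles-y-graph} can collapse to zero. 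So I would replace \autoref{prop:homology-injection} by a statement that a \emph{star class} at $v$ survives in $H_1(\Conf_2(\overline{G}_v,W_v))$, detecting it with the basic-class technology of \cite{Luetgehetmann17} and \cite{CheLue16} and the Mayer--Vietoris spectral sequence of \autoref{sec:mv-spectral-sequence} --- exactly the machinery that already succeeds for banana graphs in Theorem \ref{thm:tc-banana-graphs}, which is itself the strongest evidence that non-articulation vertices do contribute the expected two to the count.

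The deeper difficulty, and the one I expect to be the main obstacle, is \emph{global} rather than local. The evaluation in Theorem \ref{thm:tc-highly-articulated} works because a class supported at $v$ pairs nontrivially only with a torus whose relevant circle moves particles through $v$; this relied on disjointness of the $G_i$, arranged by subdivision. Once vertices are not separated by articulations the quotient maps $\Phi_{i,j}$ no longer cleanly isolate the vertices, and one must rule out unwanted cross-terms in the product $\prod_{i=1}^d\overline u_{i,i}\cdot\prod_{i=1}^d\overline u_{i,i+1}$. A uniform treatment seems to require a refined non-vanishing criterion: either organizing the supporting subgraphs so that the pairing matrix against the family $\{z_\sigma\}$ remains unitriangular, or bounding the interaction terms via spectral-sequence splittings of the type established in the banana computation. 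It is precisely this global independence step that resists a treatment valid for all connected graphs, which is why the statement is posed as a conjecture rather than proved here.
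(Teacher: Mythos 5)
This statement is Farber's conjecture, which the paper records but does not prove: it is open, and the paper's own results (Theorem \ref{thm:tc-separable-graphs} for fully articulated graphs and Theorem \ref{thm:tc-banana-graphs} for banana graphs) are offered only as evidence, so there is no proof in the paper to compare against. Your proposal correctly treats it as such: the upper bound $\TC(\Conf_n(G))\le 2|V_{\ge3}|$ via \autoref{prop:combinatorial-model} and \autoref{lem:upperbound} is exactly the argument the paper uses in Theorem \ref{thm:tc-highly-articulated} and does hold for every connected graph, and you accurately isolate the real obstruction to the lower bound, namely that \autoref{prop:homology-injection} genuinely needs $v$ to be an articulation (with a single sink in $\overline{G}_v$ the exchange class can die, just as the paper observes that $\HH$-classes become trivial in the banana quotients), together with the need to re-establish the diagonal-pairing evaluation once that local input is replaced. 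Since neither you nor the paper closes that gap, your write-up is a correct assessment of the state of the problem rather than a proof, which is the most that can be said here.
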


In the same paper Farber proved that the conjecture holds for
trees. The results in this paper provide further evidence for this
conjecture by showing that it holds for the more general fully
articulated graphs and for most banana graphs.

In Theorem \ref{thm:tc-separable-graphs} we also show that, for $T$ a
tree, \[\TC(\Conf_n(T))= 2 \lfloor n/2 \rfloor\] grows steadily in $n$
while $n<2|V_{\ge3}|$ until it stabilizes at
\[\TC(\Conf_n(T))=2|V_{\ge3}|\] for $n\ge2|V_{\ge3}|$.

This does not generalize to banana graphs: by
Theorem \ref{thm:tc-banana-graphs} we have that \[\TC(\Conf_3(B_k))=4,\] but
$3<2|V_{\ge3}|$.  This raises the problem of understanding the
behaviour of $\TC(\Conf_n(G))$ for small $n$, for a general graph $G$.

Another open question is the relationship between $\TC(\Conf_n(G))$ for ordered configuration spaces and $\TC(\UConf_n(G))$ for unordered configuration spaces. Scheirer showed in \cite{Sch} that they coincide
in many cases and so one might be tempted to conjecture that they are
always equal, provided that $\UConf_n(G)$ is connected. However, this is in fact not the case.

A counterexample is given by
$G=H$ and $n=4$. We know from Theorem \ref{thm:tc-trees} that
$\TC(\Conf_4(H))=4$. However by the work of Connolly and Doig \cite[Prop.~8 and
Prop.~11]{ConDoi14} we see that $\UConf_4(H)$ is the classifying space for
$F_{10}\ast(\Z\times\Z)$ and so $\TC(\UConf_4(H))=3$ by \cite{CohPru08},
where the topological complexity of classifying spaces of right-angled
Artin groups is computed. This shows that Farber's conjecture does not hold true in
the unordered setting, not even for trees.

One can similarly construct infinitely many such counterexamples
by glueing $m$ Y graphs together such that all essential vertices lie on an interval
and taking $n=2m$. It is worth noting that in all these cases
the topological complexity in the unordered setting is in fact
\emph{smaller} than in the ordered one.





\bibliographystyle{halpha} \bibliography{tc-conf-graph}
\end{document}